\newtheorem{df}{Definition}[section]
\newtheorem{rem}[df]{Remark}
\newtheorem{ex}[df]{Example}
\newtheorem{thm}[df]{Theorem}
\newtheorem{pp}[df]{Proposition} 
\newtheorem{lm}[df]{Lemma}
\newtheorem{cor}[df]{Corollary}
\newtheorem*{notation}{Notation}
\newtheorem{thm intro}{Theorem}
\def\so{\mathfrak{so}}
\def\gg{\mathfrak{g}}
\def\hh{\mathfrak{h}}
\def\rr{\mathfrak{r}}
\def\gt{\tilde{\gg}}
\def\gl{\mathfrak{gl}}
\title{\Large{\textbf{Cubic Dirac operators and the strange Freudenthal-de Vries formula for colour Lie algebras}}}
\author{Philippe Meyer\\
\normalsize	Mathematical Institute, University of
Oxford, Oxford, Oxfordshire OX2 6GG, UK\\
\normalsize \texttt{philippe.meyer@maths.ox.ac.uk}}
\date{}
\begin{document}
\maketitle

\begin{center}
\textbf{Abstract}
\end{center}
The aim of this paper is to define cubic Dirac operators for colour Lie algebras. We give a necessary and sufficient condition to construct a colour Lie algebra from an $\epsilon$-orthogonal representation of an $\epsilon$-quadratic colour Lie algebra. This is used to prove a strange Freudenthal-de Vries formula for basic colour Lie algebras as well as a Parthasarathy formula for cubic Dirac operators of colour Lie algebras. We calculate the cohomology induced by this Dirac operator, analogously to the algebraic Vogan conjecture proved by Huang and Pand\v{z}i\'c.

\let\thefootnote\relax\footnote{\textit{Key words}: cubic Dirac operator $\cdot$ colour Lie algebra $\cdot$ Freudenthal-de Vries formula.}
\let\thefootnote\relax\footnote{\textit{2020 Mathematics Subject Classification}: 17B10, 17B75.}
\let\thefootnote\relax\footnote{This work was supported by the Engineering and Physical Sciences Research Council grant [EP/N033922/1].}
\section{Introduction}
The use of Dirac operators in representation theory started by Parthasarathy in \cite{Parthasarathy72} has been developed and generalised in many directions. An important advancement of this approach is the construction of discrete series representations of semisimple Lie groups as kernels of Dirac operators by Atiyah and Schmid (\cite{AtiyahSchmid77}). In \cite{Kos99}, Kostant studies Dirac operators in a more general setting and shows that the introduction of a cubic term in the usual Dirac operator is necessary for non-symmetric pairs. Huang and Pand\v{z}i\'c calculated the cohomology associated to Dirac operators as conjectured by Vogan (\cite{HuangPandzic02}) and then Dirac cohomologies have been studied in various situations (e.g. \cite{Kostant00},\cite{HPR06},\cite{MZ14}). The theory of Dirac operators has been extended to other algebraic structures: for example this has been studied for Lie superalgebras in \cite{Pengpan99},\cite{HuangPandzic05},\cite{KMP14},\cite{Xiao17} and for Hecke algebras and rational Cherednik algebras in \cite{BCT12},\cite{C16},\cite{CdM17}.
\vspace{0.2cm}

Colour Lie algebras generalise both Lie algebras and Lie superalgebras and have been originally introduced by Ree in \cite{Ree60}. The terminology ``colour'' comes from their use in mathematical physics by Rittenberg and Wyler in \cite{RittenbergWyler78-2},\cite{RittenbergWyler78-1} and their study has been intensified by Scheunert in \cite{Scheunert79},\cite{Scheunert83-2},\cite{Scheunert83}.
\vspace{0.2cm}

In this article we define and study cubic Dirac operators for colour Lie algebras. We first recall general properties of the multilinear algebra of vector spaces graded by an abelian group $\Gamma$ with respect to a commutation factor $\epsilon$ of $\Gamma$ (Section \ref{section multilinear}). In particular, we give properties of the generalised Clifford theory developed in \cite{Nishiyama90} and \cite{ChenKang16} and we study some features of roots of ``basic'' colour Lie algebras similar to properties of semisimple complex Lie algebras.
\vspace{0.2cm}

In Section \ref{section colour Lie type and FdV} we give a necessary and sufficient condition to extend the action of an $\epsilon$-quadratic colour Lie algebra $\gg$ on an $\epsilon$-orthogonal representation $V$ to define a colour Lie algebra structure on $\gt=\gg\oplus V$ (see Theorem \ref{thm N(mu+phi)=0}). If this condition is satisfied we say that the representation $V$ is of colour Lie type. This is similar to the conditions of \cite{meyer2019kostant} and generalises the results of Kostant (\cite{Kos99},\cite{Kostant01}) for orthogonal and symplectic complex representations of quadratic Lie algebras and the results of Chen and Kang (\cite{ChenKang2015}) for orthosymplectic complex representations of quadratic Lie superalgebras. If $\gg$ is a basic colour Lie algebra, then this condition for the adjoint representation of $\gg$ compared to the action of the Casimir element of $\gg$ on an oscillator representation of the $\epsilon$-Clifford algebra of $V$ (\cite{Nishiyama90}) leads to the strange Freudenthal-de Vries formula for colour Lie algebras:

\begin{thm intro}
Let $k$ be a field of characteristic $0$. Let $\gg$ be a finite-dimensional basic colour Lie algebra in the sense of Subsection \ref{subsection split CLA} and such that its $\epsilon$-Killing form $B_{\gg}$ is non-degenerate. We have
$$dim_{\epsilon}(\gg)=24B_{\gg^*}(\rho,\rho),$$
where $\rho$ is the Weyl vector of $\gg$ with respect to a Cartan subalgebra $\hh$ of $\gg$ and a choice of a full set of positive roots for the action of $ad(\hh)$.
\end{thm intro}
Note that the proof of Freudenthal and de Vries of this formula for Lie algebras uses the Weyl character formula (\cite{FreudenthalDeVries69}), the proof of Möseneder, Kac and Papi of this formula for Lie superalgebras uses vertex algebras (\cite{KMP14}), but our proof uses Clifford theory and is more in the spirit of Fegan and Steer (\cite{FS89}).
\vspace{0.2cm}

Afterwards, in Section \ref{section cubic Dirac operator}, we define a cubic Dirac operator for the $\epsilon$-orthogonal representation of colour Lie type $V$ of the $\epsilon$-quadratic colour Lie algebra $\gg$. The operator $D$ is defined to be the identity $Id\in {\rm End}(V)$ under the canonical isomorphism between ${\rm End}(V)$ and $V\otimes V$ and the cubic Dirac operator $\tilde{D}$ is defined by adding to $D$ the cubic term $\psi$ in the $\epsilon$-Clifford algebra of $V$ which is defined by the projection on $V$ of the bracket of $\gt=\gg\oplus V$ restricted to $V\times V$ similarly to \cite{Kos99}. We calculate the square of this cubic Dirac operator analogously to Parthasarathy's formula (\cite{Parthasarathy72}) for the square of the Dirac operator of a Cartan decomposition of a reductive Lie algebra:

\begin{thm intro}
Let $k$ be a field of characteristic not $2$ or $3$. Let $\pi:\gg\rightarrow\so_{\epsilon}(V,(\phantom{v},\phantom{v}))$ be a finite-dimensional $\epsilon$-orthogonal representation of colour Lie type of a finite-dimensional $\epsilon$-quadratic colour Lie algebra $(\gg,B_{\gg})$. Let $\gt=\gg\oplus V$ be the associated colour Lie algebra and let $\tilde{D}$ be the cubic Dirac operator. In $U_{\epsilon}(\gt)\otimes C_{\epsilon}(V,(\phantom{v},\phantom{v}))$, we have
$$\tilde{D}^2=\Omega(\gt)\otimes 1-\Delta(\Omega(\gg))+\frac{1}{24}\Big(Tr_{\epsilon}(ad_{\gt}(\Omega(\gt)))-Tr_{\epsilon}(ad_{\gg}(\Omega(\gg)))\Big) 1\otimes 1,$$
where $\Omega(\gt)$ (resp. $\Omega(\gg)$) is the Casimir element of $\gt$ (resp. $\gg$).
\end{thm intro}
\vspace{0.2cm}

In this theorem, $\Delta$ is the diagonal embedding of $\gg$ in $U_{\epsilon}(\gt)\otimes C_{\epsilon}(V,(\phantom{v},\phantom{v}))$ coming from the fact that the $\epsilon$-orthogonal colour Lie algebra $\so_{\epsilon}(V,(\phantom{v},\phantom{v}))$ is canonically isomorphic to the second $\epsilon$-exterior power $\Lambda_{\epsilon}^2(V)$ and that we can quantise $\Lambda_{\epsilon}^2(V)$ to the $\epsilon$-Clifford algebra of $V$ (Section \ref{section multilinear}).
\vspace{0.2cm}

Finally, this cubic Dirac operator induces a differential map and we calculate the cohomology of this differential complex in Section \ref{section cohomology}, similarly to the proof of Huang and Pand\v{z}i\'c of the Vogan conjecture (\cite{HuangPandzic02}):

\begin{thm intro}
Let $k$ be a field of characteristic $0$. Let $\pi:\gg\rightarrow\so_{\epsilon}(V,(\phantom{v},\phantom{v}))$ be a finite-dimensional $\epsilon$-orthogonal representation of colour Lie type of a finite-dimensional $\epsilon$-quadratic colour Lie algebra $(\gg,B_{\gg})$. Let $\gt=\gg\oplus V$ be the associated colour Lie algebra, let $\tilde{D}$ be the cubic Dirac operator and let $d$ be its induced differential on $(U_{\epsilon}(\gt)\otimes C_{\epsilon}(V,(\phantom{v},\phantom{v})))^{\gg}$. Then we have
$$Ker(d)=\mathcal{Z}_{\epsilon}(\Delta(\gg))\oplus Im(d).$$
\end{thm intro}

To prove this theorem, we have to define a ``standard resolution'' of the field $k$ similar to the Koszul and de Rham differentials and to prove the analogues of the Koszul cohomology and the Poincaré lemma in the context of $\epsilon$-symmetric modules (Lemma \ref{lm koszul complex}).
\vspace{0.2cm}

\begin{notation}
Let $k$ be a field of characteristic not $2$ or $3$, let $\Gamma$ be an abelian group and let $\epsilon$ be a commutation factor of $\Gamma$. The reader which is only interested in supersymmetry and Lie superalgebras can assume $\Gamma=\mathbb{Z}_2$ and $\epsilon(a,b)=(-1)^{ab}$ for all $a,b\in\mathbb{Z}_2$.
\end{notation}

\section{Multilinear algebra over vector spaces graded by an abelian group}\label{section multilinear}

\subsection{Basic definitions and properties about graded vector spaces}

Let $V=\bigoplus \limits_{\gamma \in \Gamma} V_{\gamma}$ be a $\Gamma$-graded vector space. For $v \in V_{\gamma}$ we set $|v|:=\gamma$. For convenience, whenever the degree of an element is used in a formula, it is assumed that this element is homogeneous and that we extend by linearity the formula for non-homogeneous elements. For $v,w$ in $V$ we denote $\epsilon(|v|,|w|)$ by $\epsilon(v,w)$ and $\epsilon(|v|,|v|)$ by $\epsilon(v)$. We always assume that a basis of a $\Gamma$-graded vector space is composed of homogeneous elements and that the base field $k$ has the trivial $\Gamma$-gradation.
\vspace{0.2cm}

If $V$ and $W$ are $\Gamma$-graded vector spaces, then $V\oplus W$ and $V\otimes W$ are $\Gamma$-graded in an obvious way. Define the $\Gamma$-graded vector space ${\rm Homgr}(V,W)$ by
$${\rm Homgr}(V,W)_{\gamma}:=\lbrace f\in {\rm Hom}(V,W) ~ | ~ f(V_a)\subseteq W_{a+\gamma} \quad \forall a \in \Gamma \rbrace \qquad \forall \gamma\in \Gamma.$$
If $V$ and $W$ are finite-dimensional, then ${\rm Homgr}(V,W)={\rm Hom}(V,W)$ and so $V^*={\rm Hom}(V,k)$ is $\Gamma$-graded. If $\lbrace e_i \rbrace$ is a basis of $V$ and $\lbrace e_i^* \rbrace$ is the basis of $V^*$ dual to $\lbrace e_i \rbrace$ in the sense that $e_i^*(e_j)=\delta_{ij}$, then $|e_i^*|=-|e_i|$. Define also $\mathcal{E}\in {\rm End}(V)$ by $\mathcal{E}(v):=\epsilon (v)v$, the $\epsilon$-trace of $f\in {\rm End}(V)$ by $Tr_{\epsilon}(f):=Tr(\mathcal{E}\circ f)$ and the $\epsilon$-dimension of $V$ by $dim_{\epsilon}(V):=Tr_{\epsilon}(Id_V)$.

\begin{rem} \label{rem VtensW iso WtensV}
The isomorphism of $\Gamma$-graded vector spaces between $V\otimes W$ and $W\otimes V$ is given by $v\otimes w\mapsto \epsilon(v,w)w\otimes v$ and hence the isomorphism of $\Gamma$-graded vector spaces between ${\rm End}(V)$ and $V\otimes V^*$ by $f\mapsto \sum\limits_i f(e_i)\otimes e_i^*$.
\end{rem}

It-is known that if $V$ is a $\Gamma$-graded vector space, there is a unique right group action $\pi : S_n \rightarrow GL(V^{\otimes n})$ of the permutation group $S_n$ on $V^{\otimes n}$ such that the action of a transposition $\tau_{i,i+1} \in S_n$ is given by
\begin{equation*}
\pi(\tau_{i,i+1})(v_1\otimes \ldots \otimes v_n)=-\epsilon(v_i,v_{i+1}) v_1 \otimes \ldots \otimes v_{i+1}\otimes v_i \otimes \ldots \otimes v_n
\end{equation*}
for all $v_1,\ldots,v_n \in V$. For arbitrary elements $\sigma,\sigma' \in S_n$, this action is given by
$$\pi(\sigma)(v_1\otimes \ldots \otimes v_n)=p(\sigma ; v_1,\ldots, v_n) v_{\sigma(1)}\otimes \ldots \otimes v_{\sigma(n)}$$ 
where
$$p(\sigma ; v_1,\ldots, v_n)=sgn(\sigma) \displaystyle \prod_{\substack{1 \leq i < j \leq n \\ \sigma^{-1}(i)>\sigma^{-1}(j) }} \epsilon(v_i,v_j)$$
and satisfies to
\begin{equation*}
p(\sigma\sigma' ; v_1,\ldots, v_n)=p(\sigma' ; v_{\sigma(1)},\ldots, v_{\sigma(n)})p(\sigma ; v_1,\ldots, v_n), \qquad p(Id ; v_1,\ldots,v_n)=1.
\end{equation*}
Recall that for multilinear maps on $\Gamma$-graded vector spaces there is a notion of symmetry and antisymmetry with respect to the commutation factor $\epsilon$.

\begin{df}
Let $V$ and $W$ be $\Gamma$-graded vector spaces and let $B : V\times V \rightarrow W$ be a bilinear map.
\begin{enumerate}[label=\alph*)]
\item We say that $B$ is $\epsilon$-symmetric if $B(v,w)=\epsilon(v,w)B(w,v)$ for all $v,w\in V$.
\item We say that $B$ is $\epsilon$-antisymmetric if $B(v,w)=-\epsilon(v,w)B(w,v)$ for all $v,w\in V$.
\item If $V$ is finite-dimensional, $\lbrace e_i\rbrace$ is a basis of $V$ and $(\phantom{v},\phantom{v})$ is a non-degenerate $\epsilon$-symmetric bilinear form of degree $0$ on $V$, then the unique basis $\lbrace e^i\rbrace$ of $V$ such that $(e_i,e^j)=\delta_{ij}$ is called the dual basis of $\lbrace e_i\rbrace$.
\end{enumerate}
\end{df}

Unless otherwise stated, we always assume the bilinear forms to be of degree $0$. The basic features of a basis and its dual basis are given in the following remark.

\begin{rem} \label{rem Id_v in V tens V}
Let $V$ be a finite-dimensional $\Gamma$-graded vector space together with a non-degenerate $\epsilon$-symmetric bilinear form $(\phantom{v},\phantom{v})$, let $\lbrace e_i\rbrace$ be a basis of $V$ and let $\lbrace e^i \rbrace$ be its dual basis.
\begin{enumerate} [label=\alph*)]
\item We have $|e^i|=-|e_i|$.
\item \label{rem Id_v in V tens V part iso V et V dual} As $\Gamma$-graded vector spaces, $V$ is canonically isomorphic to $V^*$ by $v\mapsto (v,\phantom{v})$ and we have $e_i^*=\epsilon(e_i)(e^i,\phantom{v})$.
\item \label{rem la forme de Id dans V tens V} Using \ref{rem Id_v in V tens V part iso V et V dual} and Remark \ref{rem VtensW iso WtensV} the $\Gamma$-graded vector spaces ${\rm End}(V)$ and $V\otimes V$ are canonically isomorphic and the identity map $Id_V\in {\rm End}(V)$ corresponds to
$$\sum\limits_i e^i\otimes e_i=\sum\limits_i \epsilon(e_i)e_i\otimes e^i \in V\otimes V.$$
In particular, $\sum\limits_i e^i\otimes e_i$ is independent of the choice of the basis $\lbrace e_i\rbrace$.
\item We have
\begin{equation}\label{eq decomp (v,ei)ei}
v=\sum\limits_i(v,e^i)e_i=\sum\limits_i\epsilon(e_i)(v,e_i)e^i=\sum\limits_i(e_i,v)e^i\qquad \forall v\in V.
\end{equation}
\item Let $W$ be a $\Gamma$-graded vector space and let $f : V\otimes V \rightarrow W$ be an $\epsilon$-antisymmetric bilinear map. Using \ref{rem la forme de Id dans V tens V} we have
$$f(\sum\limits_i e^i\otimes e_i)=0.$$
\end{enumerate}
\end{rem}

\subsection{$\epsilon$-exterior algebra and $\epsilon$-Clifford algebra}

Let $(V,(\phantom{v},\phantom{v}))$ be a finite-dimensional $\Gamma$-graded vector space together with a non-degenerate $\epsilon$-symmetric bilinear form. In this subsection we recall basic properties of the $\epsilon$-exterior algebra $V$ and of the $\epsilon$-Clifford algebra of $(V,(\phantom{v},\phantom{v}))$ (see \cite{ChenKang16}).

\begin{df}
\begin{enumerate}[label=\alph*)]
\item The $\epsilon$-Clifford algebra $C_{\epsilon}(V,(\phantom{v},\phantom{v}))$ of $V$ is defined by $$C_{\epsilon}(V,(\phantom{v},\phantom{v})):=T(V)/I(V),$$
where $T(V)$ is the tensor algebra over $V$ and $I(V)$ is the two-sided ideal of $T(V)$ generated by elements of the form
$$x\otimes y+\epsilon(x,y)y\otimes x-2(x,y)1 \qquad \forall x,y \in V.$$
\item The $\epsilon$-exterior algebra $\Lambda_{\epsilon}(V)$ of $V$ is defined to be $C_{\epsilon}(V,(\phantom{v},\phantom{v}))$ where $(\phantom{v},\phantom{v})$ is totally degenerate.
\end{enumerate}
\end{df}

The tensor algebra $T(V)$ is a $\mathbb{Z}\times\Gamma$-graded algebra and then the $\epsilon$-Clifford algebra $C_{\epsilon}(V,(\phantom{v},\phantom{v}))$ is a $\mathbb{Z}_2\times\Gamma$-graded algebra and the $\epsilon$-exterior algebra $\Lambda_{\epsilon}(V)$ is a $\mathbb{Z}\times\Gamma$-graded algebra. For the universal property of the $\epsilon$-Clifford algebra, see \cite{ChenKang16}. The $\epsilon$-Clifford algebra $C_{\epsilon}(V,(\phantom{v},\phantom{v}))$ is filtered and its associated graded algebra is isomorphic to $\Lambda_{\epsilon}(V)$. Define $e: V\rightarrow {\rm End}(\Lambda_{\epsilon}(V))$ by $e(v)(w):=v\wedge w$ for all $v\in V$, for all $w\in \Lambda_{\epsilon}(V)$ and define $i: V\rightarrow {\rm End}(T(V))$ by
$$i(v)(w_1\otimes\ldots\otimes w_n):=\sum\limits_{i}(-1)^{i-1} \epsilon(v,w_1+\ldots +w_{i-1})(v,w_i)w_1\otimes \ldots \otimes \hat{w_i}\otimes \ldots\otimes w_n \quad \forall v\in V, ~\forall w_1\otimes\ldots\otimes w_n\in T(V).$$
This descends to define $i:V\rightarrow {\rm End}(\Lambda_{\epsilon}(V))$ and by the universal property of the $\epsilon$-Clifford algebra, the map $\gamma:V\rightarrow {\rm End}(\Lambda_{\epsilon}(V))$ defined by $\gamma(v)=e(v)+i(v)$ for all $v\in V$ extends to define $\gamma:C_{\epsilon}(V,(\phantom{v},\phantom{v}))\rightarrow {\rm End}(\Lambda_{\epsilon}(V))$. Using the maps $e$ and $i$ we can define oscillator representations for a particular class of $\epsilon$-Clifford algebras.

\begin{df} \label{def oscillator}
Suppose that one of the following holds:
\begin{enumerate}[label=\alph*)]
    \item there are two maximal isotropic $\Gamma$-graded subspaces $W$ and $W'$ such that $V=W\oplus W'$;
    \item \label{def oscillator b} there are two maximal isotropic $\Gamma$-graded subspaces $W$ and $W'$, a one-dimensional subspace $U$ of degree $0$ orthogonal to $W$ and $W'$, $u\in U$ such that $(u,u)=1$ and $V=W\oplus W'\oplus U$.
\end{enumerate}
Define the oscillator representation $m:V\rightarrow {\rm End}(\Lambda_{\epsilon}(W))$ by
$$m(w+w'):=e(w)+2i(w') \qquad \forall w\in W, ~ \forall w'\in W',$$
and, if \ref{def oscillator b} holds, by
$$m(u):=Id|_{\Lambda_{\epsilon}(W)_{\bar{0}}}-Id|_{\Lambda_{\epsilon}(W)_{\bar{1}}},$$
where $\Lambda_{\epsilon}(W)_{\bar{0}}:=\bigoplus \limits_{i\in \mathbb{N}}\Lambda^{2i}_{\epsilon}(W)$ and $\Lambda_{\epsilon}(W)_{\bar{1}}:=\bigoplus \limits_{i\in \mathbb{N}}\Lambda^{2i+1}_{\epsilon}(W)$.
\end{df}

Using the universal property of the $\epsilon$-Clifford algebra, this extends to define $m:C_{\epsilon}(V,(\phantom{v},\phantom{v}))\rightarrow {\rm End}(\Lambda_{\epsilon}(W))$. There is a quantisation map from the $\epsilon$-exterior algebra to the $\epsilon$-Clifford algebra:

\begin{df}\label{df quantisation}
Let $n\in \mathbb{N}$ and suppose that $char(k)=0$ or $n<char(k)$. Define the quantisation map $Q_n :\Lambda^n_{\epsilon}(V) \rightarrow C_{\epsilon}(V,(\phantom{v},\phantom{v}))$ by
$$Q_n(v_1\wedge\ldots \wedge v_n):=\frac{1}{n!}\sum\limits_{\sigma \in S_n}p(\sigma;v_1,\ldots,v_n)v_{\sigma(1)}\cdot\ldots\cdot v_{\sigma(n)}.$$
\end{df}

Since the graded map $gr(Q_n)\in {\rm End}(\Lambda_{\epsilon}^n(V))$ is the identity, the map $Q_n$ is injective. Furthermore, if $char(k)=0$, then the map $\sum\limits_n Q_n:\Lambda_{\epsilon}(V) \rightarrow C_{\epsilon}(V,(\phantom{v},\phantom{v}))$ is an isomorphism of $\Gamma$-graded vector spaces and its inverse map is $\gamma':C_{\epsilon}(V,(\phantom{v},\phantom{v}))\rightarrow \Lambda_{\epsilon}(V)$ given by $\gamma'(v)=\gamma(v)(1)$ for all $v\in C_{\epsilon}(V,(\phantom{v},\phantom{v}))$ (see \cite{ChenKang16}).
\vspace{0.2cm}

We now extend $(\phantom{v},\phantom{v})$ to $\Lambda_\epsilon(V)$. Let $u_1,\ldots,u_n,v_1,\ldots,v_n\in V$ and consider the bilinear forms on $T^n(V)$ defined by
$$B(u_1\otimes\ldots\otimes u_n,v_1\otimes\ldots\otimes v_n):=\prod\limits_{i=0}^{n-1} (u_{n-i},v_{1+i})$$
and
$$(u_1\otimes\ldots\otimes u_n,v_1\otimes\ldots\otimes v_n)_{\Lambda}:=\sum\limits_{\sigma\in S_n}B(\pi(\sigma)(u_1\otimes\ldots\otimes u_n),v_1\otimes\ldots\otimes v_n).$$
One can check that $$(u_1\otimes\ldots\otimes u_n,v_1\otimes\ldots\otimes v_n)_{\Lambda}=\sum\limits_{\sigma\in S_n}B(u_1\otimes\ldots\otimes u_n,\pi(\sigma)(v_1\otimes\ldots\otimes v_n))$$
and so this descends to define a bilinear form on $\Lambda^n_{\epsilon}(V)$.
\begin{pp}\label{pp Lambda dual isom to Lambda}
Let $n\in \mathbb{N}$ and suppose that $char(k)=0$ or $n<char(k)$.
\begin{enumerate}[label=\alph*)]
\item The bilinear form $(\phantom{v},\phantom{v})_{\Lambda}$ of $\Lambda^n_{\epsilon}(V)$ defined above is $\epsilon$-symmetric and non-degenerate.
\item The linear map $\Lambda^n_{\epsilon}(V)\rightarrow \Lambda^n_{\epsilon}(V)^*$ given by $f\mapsto (f,\phantom{f})_{\Lambda}$ is an isomorphism of $\Gamma$-graded vector spaces and for $f\in \Lambda^n_{\epsilon}(V)^*$, the element $\tilde{f}\in\Lambda^n_{\epsilon}(V)$ corresponding to $f$ under this isomorphism satisfies to
\begin{equation*}
\tilde{f}=\frac{1}{n!} \sum\limits_{i_1,\ldots,i_n}f(e^{i_1}\wedge \ldots \wedge e^{i_n})e_{i_n}\wedge \ldots \wedge e_{i_1}\in \Lambda^n_{\epsilon}(V)
\end{equation*}
and
\begin{equation*}
Q_n(\tilde{f})=\frac{1}{n!} \sum\limits_{i_1,\ldots,i_n}f(e^{i_1}\wedge \ldots \wedge e^{i_n})e_{i_n}\cdot \ldots \cdot e_{i_1} \in C_{\epsilon}(V,(\phantom{v},\phantom{v}))
\end{equation*}
where $\lbrace e_i\rbrace$ is a basis of $V$ and $\lbrace e^i \rbrace$ its dual basis.
\end{enumerate}
\end{pp}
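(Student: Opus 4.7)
My plan is to prove part (a) directly from the definition of $B$, then deduce (b) by pairing the proposed formula for $\tilde{f}$ against a spanning set of $\Lambda^n_{\epsilon}(V)$. The $\epsilon$-symmetry reduces to a calculation on $V$ that survives summation over $S_n$, and the non-degeneracy follows by exhibiting a natural duality of basis elements of $\Lambda^n_{\epsilon}(V)$ under the pairing.

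For $\epsilon$-symmetry, I would apply the relation $(u_{n-i}, v_{1+i}) = \epsilon(u_{n-i}, v_{1+i})(v_{1+i}, u_{n-i})$ to each factor of the product defining $B(u,v)$ and re-index via $j = n - i$. This yields
$$B(u_1 \otimes \ldots \otimes u_n, v_1 \otimes \ldots \otimes v_n) = \left(\prod_{i=0}^{n-1} \epsilon(u_{n-i}, v_{1+i})\right) B(v_1 \otimes \ldots \otimes v_n, u_1 \otimes \ldots \otimes u_n).$$
If $B(u, v) \neq 0$, then $|u_{n-i}| = -|v_{1+i}|$ for all $i$, so the product of $\epsilon$'s collapses to $\prod_i \epsilon(v_{1+i})^{-1}$. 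Using the identity $\epsilon(a, b)\epsilon(b, a) = 1$ from the definition of a commutation factor, one verifies that this quantity agrees with $\epsilon(|U|, |V|)$ where $|U|, |V|$ are the total degrees—exactly the factor of $\epsilon(u,v)$ needed. Summing over $S_n$ and using the two equivalent expressions for $(\phantom{v}, \phantom{v})_\Lambda$ noted just before the proposition transports this identity to $\Lambda^n_\epsilon(V)$.

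For non-degeneracy, the pairing $(u_1 \wedge \ldots \wedge u_n, v_1 \wedge \ldots \wedge v_n)_\Lambda$ can be rewritten, by absorbing the reversal built into $B$ into the summation variable $\sigma$, as a signed sum over $\tau \in S_n$ of products $\prod_j (u_{\tau(j)}, v_j)$, which is the colour analogue of a determinant. Choosing a homogeneous basis $\{e_i\}$ of $V$ with dual basis $\{e^i\}$, one then checks that the ``reversed wedge'' basis $\{e_{i_n} \wedge \ldots \wedge e_{i_1}\}$ of $\Lambda^n_\epsilon(V)$ pairs non-trivially with $\{e^{j_1} \wedge \ldots \wedge e^{j_n}\}$: indeed $B(e_{i_n} \otimes \ldots \otimes e_{i_1}, e^{j_1} \otimes \ldots \otimes e^{j_n}) = \prod_k \delta_{i_k, j_k}$ by the dual-basis property, so when the multi-indices match this ``identity'' term survives in the sum over $\sigma$. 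Non-degeneracy follows.

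For part (b), the isomorphism $\Lambda^n_\epsilon(V) \to \Lambda^n_\epsilon(V)^*$ is a consequence of non-degeneracy and finite-dimensionality. To establish the formula for $\tilde{f}$, I would substitute the proposed expression into $(\tilde{f}, e^{j_1} \wedge \ldots \wedge e^{j_n})_\Lambda$ and invoke the pairing computation from the previous step; the $\frac{1}{n!}$ factor compensates for the $n!$-fold redundancy in the multi-index sum coming from permutations producing the same wedge product (up to the sign $p(\sigma; \cdot)$). The formula for $Q_n(\tilde{f})$ is then obtained by applying Definition \ref{df quantisation} term by term: the sum over $\sigma \in S_n$ inside $Q_n$ is absorbed into a change of variable on $(i_1, \ldots, i_n)$, which cancels the extra $1/n!$ from $Q_n$. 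The main obstacle I anticipate is the bookkeeping of $\epsilon$-signs in the reversal-and-symmetrization structure of $B$—in particular, verifying that the conventions of Remark \ref{rem Id_v in V tens V}, such as the factor $\epsilon(e_i)$ relating $e_i^*$ to $(e^i, \phantom{v})$, are precisely what is needed so that the reversed ordering $e_{i_n} \wedge \ldots \wedge e_{i_1}$ in $\tilde{f}$ makes these signs collapse cleanly.
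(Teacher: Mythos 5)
Your proposal is essentially correct and, for parts (a), non-degeneracy, and the formula for $\tilde f$, follows the same route as the paper: the paper dismisses $\epsilon$-symmetry as ``a straightforward calculation'' which you spell out, and the non-degeneracy argument via pairing reversed basis wedges against dual basis wedges and the subsequent verification of the formula for $\tilde f$ by substitution into $(\tilde f, e^{j_1}\wedge\ldots\wedge e^{j_n})_\Lambda$ match the paper's strategy in structure, including the $n!$ bookkeeping (the paper makes the count precise by identifying the diagonal pairing with the order of the stabiliser of the multi-index, which you gesture at slightly less carefully, but the idea is the same). One small remark on non-degeneracy: saying merely that the ``identity term survives'' is not quite a complete argument, since other $\sigma$-terms could in principle interfere; the paper avoids this by showing the full sum equals $|\lbrace \sigma : \sigma\cdot(i')=(i')\rbrace|$, a positive integer, which works because $p(\sigma;\cdot)=1$ for stabilising $\sigma$ whenever the corresponding wedge is nonzero.

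Where you genuinely diverge is the proof of $Q_n(\tilde f)=\frac{1}{n!}\sum f(e^{i_1}\wedge\ldots\wedge e^{i_n})\,e_{i_n}\cdots e_{i_1}$. The paper expands $Q_n(e_{i_n}\wedge\ldots\wedge e_{i_1})=e_{i_n}\cdots e_{i_1}+B$ via repeated use of the Clifford relation, where $B$ collects lower-degree terms carrying factors $(e_{i_j},e_{i_k})$, and then proves the auxiliary identity $\sum_{i_1,\ldots,i_n} f(e^{i_1}\wedge\ldots\wedge e^{i_n})(e_{i_j},e_{i_k})=0$ (its equation \eqref{proof formula Lambdan dual to Clifford}) so the $B$-contributions drop out upon summation. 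You instead apply the definition of $Q_n$ directly and absorb the inner $\sum_{\sigma\in S_n}$ into a reindexing $(i_1,\ldots,i_n)\mapsto (j_1,\ldots,j_n)$ of the outer sum, using the $\epsilon$-antisymmetry of $f$ so that each $\sigma$-term, after reindexing, reproduces the $\sigma=\mathrm{id}$ term; the two factors of $1/n!$ then collapse to one. This is a legitimate and arguably cleaner alternative: it never generates the lower-degree terms that the paper must cancel, and sidesteps the auxiliary identity entirely. Both routes depend on the same $\epsilon$-sign discipline; in your change-of-variable approach the precise fact needed is that the $p$-factor relating $e^{j_1}\wedge\ldots\wedge e^{j_n}$ to $e^{i_1}\wedge\ldots\wedge e^{i_n}$ equals $p(\sigma;e_{i_n},\ldots,e_{i_1})$, which uses the cocycle property of $p$ and the invariance $\epsilon(-a,-b)=\epsilon(a,b)$ (so $p(\cdot;e^{i_1},\ldots,e^{i_n})=p(\cdot;e_{i_1},\ldots,e_{i_n})$). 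You correctly flag this sign bookkeeping as the main point requiring care, and it does in fact work out.
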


\begin{proof}
A straightforward calculation shows that $(\phantom{v},\phantom{v})_{\Lambda}$ is $\epsilon$-symmetric. Let $(i_1',\ldots,i_n')$. It is non-degenerate since 
$$(e_{i_n'}\wedge \ldots \wedge e_{i_1'},e^{i_1'}\wedge \ldots \wedge e^{i_n'})_{\Lambda}=|\lbrace \sigma \in S_n ~ | ~ \sigma\cdot (i_1',\ldots,i_n') =(i_1',\ldots,i_n') \rbrace|.$$
We want to show that
\begin{equation}\label{proof formula Lambdan dual}
\Big(\frac{1}{n!} \sum\limits_{i_1,\ldots,i_n}f(e^{i_1}\wedge \ldots \wedge e^{i_n})e_{i_n}\wedge \ldots \wedge e_{i_1},e^{i_1'}\wedge \ldots \wedge e^{i_n'}\Big)_{\Lambda}=f(e^{i_1'}\wedge \ldots \wedge e^{i_n'}).
\end{equation}
Let $(i_1,\ldots,i_n)$ such that for all $\sigma \in S_n$ we have $\sigma\cdot (i_1,\ldots,i_n)\neq(i_1',\ldots,i_n')$. Hence
$$\Big(e_{i_n}\wedge \ldots \wedge e_{i_1},e^{i_1'}\wedge \ldots \wedge e^{i_n'}\Big)_{\Lambda}=0$$
and so
$$\Big(\frac{1}{n!} \sum\limits_{i_1,\ldots,i_n}f(e^{i_1}\wedge \ldots \wedge e^{i_n})e_{i_n}\wedge \ldots \wedge e_{i_1},e^{i_1'}\wedge \ldots \wedge e^{i_n'}\Big)_{\Lambda}=\frac{1}{n!}\sum\limits_{(i_1,\ldots,i_n)\in S}f(e^{i_1}\wedge \ldots \wedge e^{i_n})(e_{i_n}\wedge \ldots \wedge e_{i_1},e^{i_1'}\wedge \ldots \wedge e^{i_n'})_{\Lambda}$$
where $S$ is the set of all $(i_1,\ldots,i_n)$ such that there exists $\sigma \in S_n$ such that $\sigma\cdot (i_1,\ldots,i_n)=(i_1',\ldots,i_n')$. For each $(i_1,\ldots,i_n)\in S$ we have
$$f(e^{i_1}\wedge \ldots \wedge e^{i_n})e_{i_n}\wedge \ldots \wedge e_{i_1}=f(e^{i_1'}\wedge \ldots \wedge e^{i_n'})e_{i_n'}\wedge \ldots \wedge e_{i_1'}$$
and then
$$\Big(\frac{1}{n!} \sum\limits_{i_1,\ldots,i_n}f(e^{i_1}\wedge \ldots \wedge e^{i_n})e_{i_n}\wedge \ldots \wedge e_{i_1},e^{i_1'}\wedge \ldots \wedge e^{i_n'}\Big)_{\Lambda}=\frac{|S|}{n!}(e_{i_n'}\wedge \ldots \wedge e_{i_1'},e^{i_1'}\wedge \ldots \wedge e^{i_n'})_{\Lambda}.$$
Finally,
$$(e_{i_n'}\wedge \ldots \wedge e_{i_1'},e^{i_1'}\wedge \ldots \wedge e^{i_n'})_{\Lambda}=|\lbrace \sigma \in S_n ~ | ~ \sigma\cdot (i_1',\ldots,i_n') =(i_1',\ldots,i_n') \rbrace|$$
and since
$$|S||\lbrace \sigma \in S_n ~ | ~ \sigma\cdot (i_1',\ldots,i_n') =(i_1',\ldots,i_n') \rbrace|=n!$$
we obtain Equation \eqref{proof formula Lambdan dual}.
\vspace{0.2cm}

For $i_j,i_k$, by \eqref{eq decomp (v,ei)ei}, we have
\begin{align}
\sum\limits_{i_1,\ldots,i_n}f(e^{i_1}\wedge \ldots \wedge e^{i_n})(e_{i_j},e_{i_k})&=\sum\limits_{i_1,\ldots,\hat{i_j},\ldots,i_n}f(e^{i_1}\wedge \ldots \wedge \sum\limits_{i_j}(e_{i_j},e_{i_k})e^{i_j}\wedge \ldots \wedge e^{i_k} \wedge \ldots \wedge e^{i_n})\nonumber\\
&=\sum\limits_{i_1,\ldots,\hat{i_j},\ldots,i_n}f(e^{i_1}\wedge \ldots \wedge e_{i_k}\wedge \ldots \wedge e^{i_k} \wedge \ldots \wedge e^{i_n})\nonumber\\
&=0. \label{proof formula Lambdan dual to Clifford}
\end{align}
Using the Clifford relation $uv=-\epsilon(u,v)vu+2(u,v)$ we have that
$$Q_n(e_{i_n}\wedge \ldots \wedge e_{i_1})=e_{i_n}\cdot \ldots \cdot e_{i_1}+B$$
where $B$ is a sum of elements of the form $(e_{i_j},e_{i_k})X$ and hence by Equation \eqref{proof formula Lambdan dual to Clifford} we obtain
$$Q_n(\tilde{f})=\frac{1}{n!} \sum\limits_{i_1,\ldots,i_n}f(e^{i_1}\wedge \ldots \wedge e^{i_n})e_{i_n}\cdot \ldots \cdot e_{i_1}.$$
\end{proof}

\subsection{Colour Lie algebras}

In this subsection, following \cite{Ree60}, we define colour Lie algebras, representations of colour Lie algebras (\cite{RittenbergWyler78-2}, \cite{RittenbergWyler78-1}, \cite{Scheunert79}) and give some properties of the moment map of an $\epsilon$-orthogonal representation of an $\epsilon$-quadratic colour Lie
algebra.

\begin{df} \label{def CLA}
A colour Lie algebra is a $\Gamma$-graded vector space $\gg$ together with a bilinear map $\lbrace \phantom{x}, \phantom{x} \rbrace : \gg \times \gg \rightarrow \gg $ such that 
\begin{enumerate}[label=\alph*)]
\item $\lbrace \gg_{\alpha}, \gg_{\beta} \rbrace \subseteq \gg_{\alpha+\beta}$ for all $\alpha,\beta \in \Gamma$,
\item $\lbrace x,y \rbrace=-\epsilon(x,y)\lbrace y,x \rbrace$ for all $x,y \in \gg \qquad$ ($\epsilon$-antisymmetry),
\item $\epsilon(z,x)\lbrace x , \lbrace y , z \rbrace \rbrace+\epsilon(x,y)\lbrace y , \lbrace z , x \rbrace \rbrace+\epsilon(y,z)\lbrace z , \lbrace x , y \rbrace \rbrace=0$ for all $x,y,z \in \gg \qquad$ ($\epsilon$-Jacobi identity).
\end{enumerate}
\end{df}

Lie (super)algebras are colour Lie algebras and here are some examples:

\begin{ex}
Let $V$ be a $\Gamma$-graded vector space.
\begin{enumerate}[label=\alph*)]
\item The associative $\Gamma$-graded algebra ${\rm Homgr}(V,V)$ is a colour Lie algebra for the bracket $\lbrace a,b \rbrace := ab-\epsilon(a,b)ba$ for $a,b$ in ${\rm Homgr}(V,V)$ and is denoted $\gl_{\epsilon}(V)$.
\item Let $(\phantom{v},\phantom{v})$ be an $\epsilon$-symmetric bilinear form on $V$. We set $\so_{\epsilon}(V,(\phantom{v},\phantom{v})):=\bigoplus \limits_{\gamma \in \Gamma} \so_{\epsilon}(V,(\phantom{v},\phantom{v}))_{\gamma}$ where
$$\so_{\epsilon}(V,(\phantom{v},\phantom{v}))_{\gamma}:=\lbrace f \in {\rm Homgr}(V,V)_{\gamma} ~ | ~ (f(v),w)+\epsilon(f,v)(v,f(w))=0 \quad \forall v,w \in V \rbrace.$$
One can show that $\so_{\epsilon}(V,(\phantom{v},\phantom{v}))$ is stable under the bracket of $\gl_{\epsilon}(V)$ and hence is a colour Lie algebra.
\end{enumerate}
\end{ex}

Let $\gg$ and $\gg'$ be colour Lie algebras. A degree $0$ linear map $f \in {\rm Homgr}(\gg, \gg')$ is a morphism of colour Lie algebras if $f(\lbrace x,y \rbrace)=\lbrace f(x),f(y) \rbrace$ for all $x,y \in \gg$ and we say that $\gg$ and $\gg'$ are isomorphic if $f$ is a linear isomorphism. A representation $V$ of $\gg$ is a $\Gamma$-graded vector space $V$ together with a morphism of colour Lie algebras $\pi : \gg \rightarrow \gl_{\epsilon}(V)$ and the adjoint representation $ad:\gg\rightarrow \gl_{\epsilon}(\gg)$ is defined by $ad(x):=\lbrace x,\phantom{y}\rbrace$ for all $x\in \gg$. An $\epsilon$-orthogonal representation $V$ of $\gg$ is a $\Gamma$-graded vector space $V$ together with a non-degenerate $\epsilon$-symmetric bilinear form $(\phantom{v},\phantom{v})$ and a morphism of colour Lie algebras $\pi : \gg \rightarrow \so_{\epsilon}(V,(\phantom{v},\phantom{v}))$. We say that $\gg$ is $\epsilon$-quadratic if there is a bilinear form $B$ on $\gg$ which is $\epsilon$-symmetric, non-degenerate and ad-invariant is the sense that
$$B(\lbrace x,y \rbrace , z)=-\epsilon(x,y)B(y,\lbrace x,z \rbrace) \qquad \forall x,y,z \in \gg.$$
The $\epsilon$-Killing form $K$ is the $\epsilon$-symmetric ad-invariant bilinear form on $\gg$ given by
$$K(x,y):=Tr_{\epsilon}(ad(x)ad(y)) \qquad \forall x,y\in \gg.$$

\begin{rem} \label{rem tensor product of reps}
\begin{enumerate}[label=\alph*)]
\item If $\pi_1 : \gg\rightarrow {\rm End}(V_1)$ and $\pi_2 : \gg\rightarrow {\rm End}(V_2)$ are representations of a colour Lie algebra $\gg$, then $\pi:\gg\rightarrow {\rm End}(V_1\otimes V_2)$ given by
$$\pi(x)(v_1\otimes v_2):=\pi_1(x)(v_1)\otimes v_2+\epsilon(x,v_1)v_1\otimes \pi_2(x)(v_2) \qquad \forall x\in \gg, ~ \forall v_1\otimes v_2\in V_1\otimes V_2$$
is also a representation of $\gg$. In particular, if $V$ is a finite-dimensional representation of $\gg$, then $T(V)$ and $\Lambda_{\epsilon}(V)$ are representations of $\gg$. If $(V,(\phantom{v},\phantom{v}))$ is an $\epsilon$-orthogonal representation of $\gg$, then $C_{\epsilon}(V,(\phantom{v},\phantom{v}))$ is also a representation of $\gg$.

\item If $\gg$ is a finite-dimensional $\epsilon$-quadratic colour Lie algebra, the isomorphism between ${\rm End}(\gg)$ and $\gg\otimes\gg$ is $\gg$-equivariant.
\end{enumerate}
\end{rem}

Unless otherwise stated, we suppose all $\epsilon$-orthogonal representations of dimension at least two.

\begin{df} Let $\gg$ be a finite-dimensional colour Lie algebra. 
\begin{enumerate}[label=\alph*)]
\item The $\epsilon$-universal enveloping algebra $U_{\epsilon}(\gg)$ of $\gg$ is defined by $$U_{\epsilon}(\gg):=T(\gg)/I(\gg),$$
where $T(\gg)$ is the tensor algebra over $\gg$ and $I(\gg)$ is the two-sided ideal of $T(\gg)$ generated by elements of the form
$$x\otimes y-\epsilon(x,y)y\otimes x-\lbrace x,y\rbrace \qquad \forall x,y \in \gg.$$
\item The $\epsilon$-symmetric algebra $S_{\epsilon}(\gg)$ of $\gg$ is defined to be $U_{\epsilon}(\gg)$ where the bracket of $\gg$ is trivial.
\item If $\gg$ is $\epsilon$-quadratic, then the Casimir element $\Omega(\gg) \in U_{\epsilon}(\gg)$ is defined by
$$\Omega(\gg):=\sum\limits_i x^ix_i \in U_{\epsilon}(\gg)$$
where $\lbrace x_i\rbrace$ is a basis of $\gg$ and $\lbrace x^i \rbrace$ its dual basis.
\end{enumerate}
\end{df}

The action of $\gg$ on $T(\gg)$ of Remark \ref{rem tensor product of reps} defines an action of $\gg$ on $U_{\epsilon}(\gg)$. For the universal property of the $\epsilon$-universal enveloping algebra, see \cite{Scheunert79}. Since $\Omega(\gg) \in U_{\epsilon}(\gg)$ corresponds to $Id_{\gg}\in {\rm End}(\gg)$, then $\Omega(\gg)$ is in the $\epsilon$-centre $\mathcal{Z}_{\epsilon}(\gg)$ of $U_{\epsilon}(\gg)$ where the $\epsilon$-centre is defined by
$$\mathcal{Z}_{\epsilon}(\gg):=\lbrace x \in U_{\epsilon}(\gg) ~ |  ~ xy=\epsilon(x,y) yx \quad \forall y \in U_{\epsilon}(\gg) \rbrace.$$

We now define and study the moment map associated to an $\epsilon$-orthogonal representation.

\begin{df} \label{df moment map}
Let $\pi : \gg \rightarrow \so_{\epsilon}(V,(\phantom{v},\phantom{v}))$ be a finite-dimensional $\epsilon$-orthogonal representation of a finite-dimensional $\epsilon$-quadratic colour Lie algebra $(\gg,B_{\gg})$. We define the moment map $\mu : V\times V \rightarrow \gg$  to be the bilinear map given by
$$B_{\gg}(x,\mu(v,w))=(\pi(x)(v),w) \qquad \forall v,w \in V, ~ \forall x \in \gg.$$
\end{df}

The moment map $\mu$ is $\epsilon$-antisymmetric and of degree $0$. The classical example of a moment map is the moment map of the fundamental representation of $\so_{\epsilon}(V,(\phantom{v},\phantom{v}))$.

\begin{pp} \label{pp moment map canonique}
Let $V$ be a finite-dimensional $\Gamma$-graded vector space together with a non-degenerate $\epsilon$-symmetric bilinear form $(\phantom{v},\phantom{v})$.
\begin{enumerate}[label=\alph*)]
\item Consider the $\epsilon$-orthogonal representation of the $\epsilon$-quadratic colour Lie algebra $(\so_{\epsilon}(V,(\phantom{v},\phantom{v})),B)$ where $B(f,g)=-\frac{1}{2}Tr_{\epsilon}(fg)$ for all $f,g \in \so_{\epsilon}(V,(\phantom{v},\phantom{v}))$. Then, the corresponding moment map $\mu_{can}$ satisfies
\begin{equation*}
\mu_{can}(u,v)(w)=\epsilon(v,w)(u,w)v-(v,w)u \qquad \forall u,v,w\in V.
\end{equation*}
\item The moment map $\mu_{can}:\Lambda^2_{\epsilon}(V)\rightarrow\so_{\epsilon}(V,(\phantom{v},\phantom{v}))$ is an isomorphism of $\Gamma$-graded vector spaces.
\item For $f\in \so_{\epsilon}(V,(\phantom{v},\phantom{v}))$, we have
\begin{equation*}
\mu_{can}^{-1}(f)=-\frac{1}{2}\sum\limits_{i}f(e^i)\wedge e_i,\qquad Q_2(\mu_{can}^{-1}(f))=-\frac{1}{2}\sum\limits_{i} f(e^i)e_i
\end{equation*}
where $\lbrace e_i\rbrace$ is a basis of $V$ and $\lbrace e^i \rbrace$ its dual basis.
\end{enumerate}
\end{pp}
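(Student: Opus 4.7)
The strategy is to prove (a) directly from the characterisation of $\mu_{can}$, use (a) to verify the inverse formula in (c), and deduce (b) from (c); part (a) is the heart of the argument, and the two parts of (c) will fall out of the same computation.

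For (a), I would define $\phi \in {\rm End}(V)$ by $\phi(w) := \epsilon(v,w)(u,w)v - (v,w)u$ and first check that $\phi \in \so_{\epsilon}(V,(\phantom{v},\phantom{v}))$ by a direct verification of $(\phi(w_1), w_2) + \epsilon(\phi, w_1)(w_1, \phi(w_2)) = 0$ using the $\epsilon$-symmetry of $(\phantom{v},\phantom{v})$. Since $\mu_{can}(u,v)$ is uniquely determined by $B(f, \mu_{can}(u,v)) = (f(u), v)$ and $B = -\tfrac12 Tr_{\epsilon}(\cdot\,\cdot)$ is non-degenerate on $\so_{\epsilon}(V,(\phantom{v},\phantom{v}))$, it then suffices to show $-\tfrac12 Tr_{\epsilon}(f\phi) = (f(u),v)$ for all $f \in \so_{\epsilon}(V,(\phantom{v},\phantom{v}))$. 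I would expand $Tr_{\epsilon}(f\phi) = \sum_i (e^i, f(\phi(e_i)))$, break $\phi$ as the difference of the two rank-one-type terms, and collapse the resulting sums via \eqref{eq decomp (v,ei)ei} in the variant $\sum_i (x, e_i) e^i = \epsilon(x) x$ (which holds because non-vanishing terms satisfy $|e_i|=-|x|$) together with the $\epsilon$-orthogonality relation $(f(v), u) = -\epsilon(f,v)(v, f(u))$. The two pieces each contribute $-(f(u),v)$ after the commutation factors cancel.

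For (c), I would apply the formula from (a) to $\omega := -\tfrac12 \sum_i f(e^i) \wedge e_i$ and evaluate $\mu_{can}(\omega)(w)$. The first piece $\tfrac12 \sum_i (e_i, w) f(e^i)$ collapses directly to $\tfrac12 f(w)$ by \eqref{eq decomp (v,ei)ei}. For the piece $-\tfrac12 \sum_i \epsilon(e_i, w)(f(e^i), w) e_i$, substitute $(f(e^i), w) = -\epsilon(f, e^i)(e^i, f(w))$ using $f \in \so_{\epsilon}$; the accumulated prefactor $\epsilon(e_i,w)\epsilon(f, e^i)$ reduces to $\epsilon(f)\epsilon(w)$ on non-vanishing terms, and a second application of \eqref{eq decomp (v,ei)ei} in the variant $\sum_i (e^i, f(w)) e_i = \epsilon(f)\epsilon(w) f(w)$ shows this piece also equals $\tfrac12 f(w)$, so $\mu_{can}(\omega) = f$. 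The Clifford formula follows from the identity $Q_2(a \wedge b) = ab - (a,b)$, a direct consequence of the Clifford relation $ab + \epsilon(a,b) ba = 2(a,b)$ and Definition \ref{df quantisation}: this gives $Q_2(\omega) = -\tfrac12 \sum_i f(e^i) e_i + \tfrac12 \sum_i (f(e^i), e_i)$, and the correction vanishes because $\sum_i (f(e^i), e_i) = Tr_{\epsilon}(f) = 0$ for $f \in \so_{\epsilon}(V,(\phantom{v},\phantom{v}))$ (any $\epsilon$-antisymmetric endomorphism is $\epsilon$-traceless).

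Part (b) is then immediate: $\mu_{can}$ is $\epsilon$-antisymmetric and of degree $0$, so it factors through a degree $0$ map $\Lambda^2_{\epsilon}(V) \to \so_{\epsilon}(V,(\phantom{v},\phantom{v}))$, and (c) provides its two-sided inverse $f \mapsto -\tfrac12 \sum_i f(e^i) \wedge e_i$. The main obstacle throughout is the bookkeeping of commutation factors: sums such as $\sum_i (u, e_i)(e^i, x)$ collapse only after identifying the degree condition forced by the non-vanishing of the inner product, applying the correct variant of \eqref{eq decomp (v,ei)ei}, and re-expressing inner products via $\epsilon$-symmetry. While each manipulation is elementary, the signs must align precisely for the final equalities to hold.
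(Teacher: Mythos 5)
Your proposal is correct in substance and, for parts (a) and (c), closely tracks what the paper does: the paper calls (a) ``a straightforward calculation'' and cites a reference, and you simply spell out that calculation via the non-degeneracy of $B=-\tfrac12 Tr_{\epsilon}(\cdot\,\cdot)$; for (c) your evaluation of $\mu_{can}\bigl(-\tfrac12\sum_i f(e^i)\wedge e_i\bigr)(w)=f(w)$ and the use of $Q_2(a\wedge b)=ab-(a,b)$ together with $\sum_i(f(e^i),e_i)=0$ is exactly the paper's computation. Where you genuinely diverge is in part (b): the paper cites the literature (\cite{ChenKang16}, \cite{MeyerThesis}) for the isomorphism, whereas you propose to \emph{deduce} (b) from the explicit formula in (c). That is a perfectly reasonable and more self-contained route, but as written it has a gap: your computation establishes only that $\mu_{can}\circ\sigma=\mathrm{id}$ on $\so_\epsilon(V,(\phantom{v},\phantom{v}))$, where $\sigma(f)=-\tfrac12\sum_i f(e^i)\wedge e_i$. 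This shows $\sigma$ is a \emph{right} inverse, hence $\mu_{can}$ is surjective, but you then assert $\sigma$ is a ``two-sided inverse'' without verifying the other composition. To make (b) airtight you must either (i) compute directly that $-\tfrac12\sum_i\mu_{can}(u,v)(e^i)\wedge e_i=u\wedge v$ for all $u,v\in V$ (a short calculation of the same flavour, again using \eqref{eq decomp (v,ei)ei} and the $\epsilon$-antisymmetry of $\wedge$), or (ii) invoke an equality of dimensions $\dim\Lambda^2_\epsilon(V)=\dim\so_\epsilon(V,(\phantom{v},\phantom{v}))$, which is true but not entirely obvious in the graded setting (it depends on the signs $\epsilon(e_i)$ in a way that must be checked). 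Option (i) is the clean fix and keeps your argument self-contained; once that is added, your proof of (b) is complete and is arguably preferable to a bare citation.
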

\begin{proof}
A straightforward calculation shows a) (or see \cite{meyer2019kostant}). For b), see \cite{ChenKang16} if $char(k)=0$ or chapter 3 of \cite{MeyerThesis}. We now prove c). Let $v\in V$. We have
\begin{equation*}
\mu_{can}\Big(-\frac{1}{2}\sum\limits_{i}f(e^i)\wedge e_i\Big)(v)=-\frac{1}{2}\sum\limits_{i,j}\Big(\epsilon(e_i,v)(f(e^i),v)e_i-(e_i,v)f(e^i)\Big)=f(v).
\end{equation*}
Since $\sum\limits_{i}(f(e^i),e_i)=0$ by Remark \ref{rem Id_v in V tens V}, we have
$$Q_2(\mu^{-1}_{can}(f))=-\frac{1}{2}\sum\limits_{i} f(e^i)e_i+\frac{1}{2}\sum\limits_{i}(f(e^i),e_i)=\sum\limits_{i} f(e^i)e_i.$$
\end{proof}

\subsection{Basic colour Lie algebras} \label{subsection split CLA}
Suppose that $char(k)=0$. Let $(\gg,B_{\gg})$ be a finite-dimensional $\epsilon$-quadratic colour Lie algebra and let $\hh$ be a Cartan subalgebra of the Lie algebra $\gg_0$. For $\alpha\in \hh^*$, define
$$\gg^\alpha=\lbrace x\in\gg ~ | ~ \lbrace h,x\rbrace=\alpha(h)x \qquad \forall h\in \hh  \rbrace.$$
We have
$$\lbrace\gg^\alpha,\gg^\beta\rbrace \subseteq \gg^{\alpha+\beta} \qquad \forall \alpha,\beta \in \hh^*.$$
Suppose that $dim(\gg^\alpha)\leq 1$ for all non-zero $\alpha\in \hh^*$, $ad(\hh)$ is diagonalisable in the sense that $\gg= \bigoplus\limits_{\alpha\in \hh^*} \gg^{\alpha}$ and $\hh$ is self-normalising in the sense that $\gg^0=\hh$. A non-zero element $\alpha\in \hh^*$ with $\gg^\alpha\neq \lbrace 0 \rbrace$ is called a root of $\gg$ with respect to $\hh$ and the set of all roots is denoted $\Delta$.

\begin{pp} \label{pp roots spaces orthogonaux}
We have the following.
\begin{enumerate}[label=\alph*)]
\item If $\alpha,\beta\in \hh^*$ are such that $\alpha+\beta\neq0$, then $B_{\gg}(\gg^\alpha,\gg^\beta)=0.$
\item The restriction of $B_{\gg}$ to $\gg^\alpha\times \gg^{-\alpha}$ is non-degenerate.
\end{enumerate}
\end{pp}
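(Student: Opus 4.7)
The plan is to mimic the classical semisimple Lie algebra argument, with careful bookkeeping for the commutation factor $\epsilon$; the key observation is that since $\hh \subseteq \gg_0$, whenever we pass $h \in \hh$ through a homogeneous element via ad-invariance, the factor $\epsilon(h,x) = \epsilon(0,|x|) = 1$, so no sign correction survives.

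For part (a), I would take homogeneous $x \in \gg^\alpha$ and $y \in \gg^\beta$ with $\alpha+\beta \neq 0$, and for an arbitrary $h \in \hh$ apply the ad-invariance of $B_{\gg}$:
$$\alpha(h)\, B_{\gg}(x,y) = B_{\gg}(\{h,x\}, y) = -\epsilon(h,x)\, B_{\gg}(x, \{h,y\}) = -\beta(h)\, B_{\gg}(x,y).$$
This gives $(\alpha+\beta)(h)\, B_{\gg}(x,y) = 0$ for every $h \in \hh$; choosing $h$ with $(\alpha+\beta)(h) \neq 0$ yields $B_{\gg}(x,y) = 0$. By linearity this extends to all (not necessarily homogeneous) elements of $\gg^\alpha$ and $\gg^\beta$.

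For part (b), I would argue by contradiction. Suppose $x \in \gg^\alpha$ is non-zero and satisfies $B_{\gg}(x, \gg^{-\alpha}) = 0$. Applying part (a) with $\beta$ ranging over all other elements of $\hh^*$ with $\gg^\beta \neq 0$, we get $B_{\gg}(x, \gg^\beta) = 0$ for every $\beta \neq -\alpha$. Using the root space decomposition $\gg = \bigoplus_\beta \gg^\beta$, it follows that $B_{\gg}(x, \gg) = 0$, contradicting the non-degeneracy of $B_{\gg}$. The symmetric argument applied to a hypothetical non-zero element of $\gg^{-\alpha}$ orthogonal to $\gg^\alpha$ finishes the proof; note that the case $\alpha = 0$ is included and yields the non-degeneracy of $B_{\gg}|_{\hh \times \hh}$ as a by-product.

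There is essentially no obstacle beyond this routine sign check; the only point that must be verified is that the $\epsilon$-factors appearing in the invariance identity trivialise when one of the arguments lies in $\gg_0$, which is immediate from the definition of a commutation factor together with $|h| = 0$.
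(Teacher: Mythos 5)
Your proof is correct and follows the same route as the paper's: for (a) you use ad-invariance of $B_{\gg}$ together with the vanishing of the $\epsilon$-factor for degree-$0$ elements, and for (b) you invoke non-degeneracy together with part (a) and the root-space decomposition. The paper states (b) in one line, but the argument is exactly the one you spell out.
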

\begin{proof}
Since $\alpha+\beta\neq 0$, there exists $h\in\hh$ such that $(\alpha+\beta)(h)\neq 0$. Let $x\in \gg^\alpha$ and let $y\in \gg^\beta$. We have
$$\alpha(h)B_{\gg}(x,y)=B_{\gg}(\lbrace h,x\rbrace,y)=-B_{\gg}(x,\lbrace h,y\rbrace)=-\beta(h)B_{\gg}(x,y),$$
then $(\alpha+\beta)(h)B_{\gg}(x,y)=0$ which implies $B_{\gg}(x,y)=0$. This shows $a)$ and $b)$ follows since $B_{\gg}$ is non-degenerate.
\end{proof}
Thus, if $\alpha$ is a root, $-\alpha$ is also a root and so one can define $\Delta^+$ and $\Delta^-$ such that $\Delta=\Delta^+\cup \Delta^-$ and such that $\alpha\in \Delta^{\pm}$ implies $-\alpha \in \Delta^{\mp}$. Suppose that the vanishing of a linear combination $\sum\limits_{\alpha\in \Delta^{\pm}}c_\alpha\alpha$ with integral coefficients $c_\alpha\geq 0$ implies that $c_\alpha=0$ for all $\alpha\in \Delta^{\pm}$. The elements in $\Delta^+$ (resp. $\Delta^-$) are called the positive (resp. negative) roots. For $\alpha\in \Delta$ we set $\epsilon(\alpha):=\epsilon(x)$ where $x$ is a non-zero element of $\gg^{\alpha}$ and we define the Weyl vector $\rho$ by
$$\rho:=\frac{1}{2}\sum\limits_{\alpha\in\Delta^+}\epsilon(\alpha)\alpha.$$
If we define $\gg^+=\bigoplus\limits_{\alpha\in\Delta^+}\gg^\alpha$ and $\gg^-=\bigoplus\limits_{\alpha\in\Delta^-}\gg^\alpha$, we have that $\gg^+$ and $\gg^-$ are nilpotent colour Lie algebras and a decomposition 
$$\gg=\gg^-\oplus \hh\oplus \gg^+.$$
By Proposition \ref{pp roots spaces orthogonaux} $(\hh,B_{\gg}|_{\hh})$ is a non-degenerate quadratic vector space. For $\alpha\in\hh^*$ define $h_{\alpha}\in \hh$ to be the unique element in $\hh$ such that $\alpha=B_{\gg}(h_{\alpha},\phantom{X})\in \hh^*$ and define a non-degenerate symmetric bilinear form $B_{\gg^*}(\phantom{v},\phantom{v})$ on $\hh^*$ by
$$B_{\gg^*}(\alpha,\beta)=B_{\gg}(h_\alpha,h_\beta) \qquad \forall \alpha,\beta \in \hh^*.$$
\begin{pp} \label{pp inner product between roots}
We have the following.
\begin{enumerate}[label=\alph*)]
\item Let $\lbrace h_i\rbrace$ be a basis of $\hh$ and let $\lbrace h^i\rbrace$ its dual basis. We have
$$B_{\gg^*}(\alpha,\beta)=\sum\limits_i \alpha(h^i)\beta(h_i) \qquad \forall \alpha, \beta \in \hh^*.$$
\item For a root $\alpha$ and $x\in \gg^\alpha,y\in \gg^{-\alpha}$ we have $\lbrace x,y \rbrace=B_{\gg}(x,y)h_{\alpha}$.
\end{enumerate}
\end{pp}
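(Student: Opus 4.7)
My plan is to treat the two parts independently; both are direct consequences of the dictionary between $\hh$ and $\hh^*$ provided by $B_{\gg}|_{\hh}$ together with ad-invariance.

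\textbf{Part (a).} The idea is to expand $h_{\alpha}$ in the basis $\{h_i\}$ using the standard formula for $\epsilon$-symmetric bilinear forms. Since $\hh\subseteq \gg_0$, every element of $\hh$ has trivial $\Gamma$-grading, so all factors of $\epsilon$ in formula \eqref{eq decomp (v,ei)ei} of Remark \ref{rem Id_v in V tens V} collapse to $1$, and we obtain
$$h_{\alpha}=\sum_i B_{\gg}(h_{\alpha},h^i)\,h_i=\sum_i \alpha(h^i)\,h_i.$$
Pairing with $h_{\beta}$ and using the definition $B_{\gg^*}(\alpha,\beta)=B_{\gg}(h_{\alpha},h_{\beta})$ yields $B_{\gg^*}(\alpha,\beta)=\sum_i \alpha(h^i)\beta(h_i)$.

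\textbf{Part (b).} First I would observe that since $\{\gg^{\alpha},\gg^{-\alpha}\}\subseteq \gg^{0}=\hh$, the element $\{x,y\}$ lies in $\hh$. As $B_{\gg}|_{\hh}$ is non-degenerate (by Proposition \ref{pp roots spaces orthogonaux} applied with $\alpha=0$), to identify $\{x,y\}$ it suffices to compute $B_{\gg}(\{x,y\},h)$ for an arbitrary $h\in\hh$ and match it against $B_{\gg}(B_{\gg}(x,y)h_{\alpha},h)=B_{\gg}(x,y)\alpha(h)$. Using ad-invariance,
$$B_{\gg}(\{x,y\},h)=-\epsilon(x,y)B_{\gg}(y,\{x,h\}),$$
and since $h$ has trivial degree, $\{x,h\}=-\{h,x\}=-\alpha(h)x$. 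Substituting and then using the $\epsilon$-symmetry of $B_{\gg}$ together with the commutation-factor identity $\epsilon(x,y)\epsilon(y,x)=1$ gives
$$B_{\gg}(\{x,y\},h)=\epsilon(x,y)\alpha(h)B_{\gg}(y,x)=\epsilon(x,y)\epsilon(y,x)\alpha(h)B_{\gg}(x,y)=B_{\gg}(x,y)\,\alpha(h),$$
which equals $B_{\gg}(B_{\gg}(x,y)h_{\alpha},h)$ by the definition of $h_{\alpha}$. Non-degeneracy of $B_{\gg}|_{\hh}$ concludes.

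There is no real obstacle here, only sign-bookkeeping. The one place to be careful is the step $\{x,h\}=-\alpha(h)x$: one must remember that $\alpha$ is a linear functional on $\hh$ rather than a $\Gamma$-degree, so the factor $\epsilon(x,h)$ that would normally appear from $\epsilon$-antisymmetry is trivial because $h$ lives in the degree-$0$ component. The remaining manipulations rely only on standard properties of the commutation factor that have already been recorded in Section \ref{section multilinear}.
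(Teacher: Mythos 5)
Your proof is correct and follows essentially the same route as the paper: part (a) by expanding $h_\alpha$ in the basis $\{h_i\}$ via the decomposition formula, and part (b) by using ad-invariance to compute $B_{\gg}(\lbrace x,y\rbrace,h)$ and then invoking non-degeneracy. The only cosmetic difference is that you note $\lbrace x,y\rbrace\in\hh$ up front and pair only against $\hh$ using non-degeneracy of $B_{\gg}|_\hh$, whereas the paper pairs against all of $\gg$ (handling the root spaces $\gg^\gamma$ separately via Proposition \ref{pp roots spaces orthogonaux}); these are interchangeable.
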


\begin{proof}
$a)$ We have
$$B_{\gg^*}(\alpha,\beta)=B_{\gg}(h_\alpha,h_\beta)=\sum\limits_iB_{\gg}(B_{\gg}(h_\alpha,h^i)h_i,\beta)=\sum\limits_i\alpha(h^i)\beta(h_i).$$
$b)$ For $z\in \gg^\gamma$, where $\gamma\in \hh^*\backslash \lbrace 0 \rbrace$, by Proposition \ref{pp roots spaces orthogonaux} we have
$$B_{\gg}(z,\lbrace x,y \rbrace)=B_{\gg}(z,B_{\gg}(x,y)h_\alpha)=0.$$
For $h\in\hh$ we have
\begin{align*}
    B_{\gg}(h,\lbrace x,y \rbrace)&=B_{\gg}(\lbrace h,x\rbrace,y)\\
    &=\alpha(h)B_{\gg}(x,y)\\
    &=B_{\gg}(h_\alpha,h)B_{\gg}(x,y)\\
    &=B_{\gg}(h,B_{\gg}(x,y)h_\alpha)
\end{align*}
and so $\lbrace x,y \rbrace=B_{\gg}(x,y)h_{\alpha}$.
\end{proof}
\vspace{0.2cm}

\begin{df}
Let $\pi : \gg\rightarrow {\rm End}(V)$ be a representation of $\gg$. We say that $V$ is a representation of $\gg$ of highest weight $\lambda$ if there exist a homogeneous non-zero element $v\in V$ and a linear form $\lambda\in \hh^*$ such that
\begin{itemize}
    \item[$\bullet$] $v$ generates $V$ as $\gg$-module;
    \item[$\bullet$] $v$ is annihilated by $\gg^+$;
    \item[$\bullet$] $\pi(h)(v)=\lambda(h)v$ for all $h\in \hh$.
\end{itemize}
\end{df}

The set of the elements $u\in V$ such that $\pi(h)(u)=\lambda(h)u$ for all $h\in \hh$ is one dimensional. We have:

\begin{pp} \label{pp action Casimir on highest weight}
The Casimir element $\Omega(\gg)\in U_{\epsilon}(\gg)$ satisfies to
$$\pi(\Omega(\gg))=B_{\gg^*}(\lambda+2\rho,\lambda)Id_V.$$
\end{pp}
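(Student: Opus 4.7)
The plan is to evaluate $\pi(\Omega(\gg))$ on the highest weight vector $v$ and then use $\epsilon$-centrality to conclude. Using the decomposition $\gg=\hh\oplus\bigoplus_{\alpha\in\Delta}\gg^\alpha$ (each root space one-dimensional) and Proposition \ref{pp roots spaces orthogonaux}, I would first choose a non-zero $e_\alpha\in\gg^\alpha$ for each $\alpha\in\Delta$; since $B_\gg$ kills $\gg^\alpha\times\gg^\beta$ unless $\alpha+\beta=0$, the dual basis vector satisfies $e^\alpha\in\gg^{-\alpha}$ (and is homogeneous in $\Gamma$ since $ad(\hh)$ preserves the $\Gamma$-grading). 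Together with a basis $\{h_i\}$ of $\hh$ and its dual $\{h^i\}$ with respect to $B_\gg|_{\hh}$, this gives
\begin{equation*}
\Omega(\gg)=\sum_i h^i h_i+\sum_{\alpha\in\Delta^+}e^\alpha e_\alpha+\sum_{\alpha\in\Delta^-}e^\alpha e_\alpha\in U_\epsilon(\gg).
\end{equation*}

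Next I would compute the action on $v$ termwise. The sum over $\Delta^+$ vanishes because $\pi(e_\alpha)(v)=0$ when $\alpha\in\Delta^+$. For $\alpha\in\Delta^-$, using the defining relation of $U_\epsilon(\gg)$, I rewrite $e^\alpha e_\alpha=\epsilon(e^\alpha,e_\alpha)\,e_\alpha e^\alpha+\{e^\alpha,e_\alpha\}$; the first term then annihilates $v$ because $e^\alpha\in\gg^{-\alpha}\subseteq\gg^+$, and Proposition \ref{pp inner product between roots}(b) gives $\{e^\alpha,e_\alpha\}=-\epsilon(e^\alpha,e_\alpha)h_\alpha$, so the surviving contribution is $-\epsilon(e^\alpha,e_\alpha)\lambda(h_\alpha)v$. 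For the Cartan part, Proposition \ref{pp inner product between roots}(a) gives directly $\sum_i\lambda(h^i)\lambda(h_i)=B_{\gg^*}(\lambda,\lambda)$.

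To collect things, I would reindex $\alpha=-\beta$ with $\beta\in\Delta^+$, observing that $h_{-\beta}=-h_\beta$ and therefore $\lambda(h_{-\beta})=-B_{\gg^*}(\lambda,\beta)$. The contribution from $\Delta^-$ then becomes
\begin{equation*}
\sum_{\beta\in\Delta^+}\epsilon(\beta)\,B_{\gg^*}(\lambda,\beta)\,v=B_{\gg^*}(\lambda,2\rho)\,v
\end{equation*}
by the definition $2\rho=\sum_{\beta\in\Delta^+}\epsilon(\beta)\beta$. Adding the Cartan contribution gives $\pi(\Omega(\gg))(v)=B_{\gg^*}(\lambda+2\rho,\lambda)v$. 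Finally, since $\Omega(\gg)\in\mathcal{Z}_\epsilon(\gg)$ is of degree $0$, $\pi(\Omega(\gg))$ commutes with $\pi(x)$ for every $x\in\gg$ with no sign, and as $v$ generates $V$ the identity extends to all of $V$.

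The only real subtlety is the bookkeeping of signs arising from the commutation factor: one has to check $\epsilon(e^\alpha,e_\alpha)=\epsilon(\alpha)$ (using $\epsilon(-a,a)=\epsilon(a,a)^{-1}$ together with $\epsilon(a,a)=\pm 1$) and, equivalently, $\epsilon(-\beta)=\epsilon(\beta)$; once these are in place the calculation is formally identical to the classical semisimple and super cases. No deeper obstacle is expected.
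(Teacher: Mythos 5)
Your proof is correct and follows essentially the same route as the paper's: decompose $\Omega(\gg)$ via a root-space-adapted basis, kill the positive-root terms on the highest-weight vector, commute the negative-root terms to produce Cartan elements via Proposition~\ref{pp inner product between roots}(b), and extend by $\epsilon$-centrality. The only difference is notational — the paper fixes the normalisation $B_\gg(e_\alpha,e_{-\alpha})=1$ for $\alpha\in\Delta^+$ and writes the dual basis explicitly, whereas you keep the abstract $e^\alpha$ and verify $\epsilon(e^\alpha,e_\alpha)=\epsilon(\alpha)$, $\epsilon(-\beta)=\epsilon(\beta)$, $h_{-\beta}=-h_\beta$ directly; the computations are the same.
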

\begin{proof}
Since $\Omega(\gg)\in \mathcal{Z}_{\epsilon}(\gg)$, cleary it acts on $V$ as a multiple of the identity. Let $\lbrace h_i\rbrace$ be a basis of $\hh$ and let $\lbrace h^i\rbrace$ be its dual basis. For each $\alpha\in\Delta^+$ let $e_\alpha\in \gg^\alpha$ and $e_{-\alpha}\in \gg^{-\alpha}$ be such that $B_{\gg}(e_\alpha,e_{-\alpha})=1$. The set
$$\lbrace h_i \rbrace \cup \lbrace e_\alpha, ~ \forall \alpha\in \Delta^+ \rbrace\cup \lbrace e_{-\alpha}, ~ \forall \alpha\in \Delta^+ \rbrace$$
is a basis of $\gg$ and its dual basis is
$$\lbrace h^i \rbrace \cup \lbrace e_{-\alpha}, ~ \forall \alpha\in \Delta^+ \rbrace\cup \lbrace \epsilon(\alpha)e_\alpha, ~ \forall \alpha\in \Delta^+ \rbrace.$$
Hence we have
$$\Omega(\gg)=\sum\limits_i h^ih_i +\sum\limits_{\alpha\in \Delta^+} e_{-\alpha}e_\alpha+\sum\limits_{\alpha\in \Delta^+} \epsilon(\alpha) e_{\alpha}e_{-\alpha}.$$
By Proposition \ref{pp inner product between roots} we have $\lbrace e_\alpha,e_{-\alpha}\rbrace =h_{\alpha}$ and so
\begin{equation}\label{proof action casimir on highest weight 1}
 \Omega(\gg)=\sum\limits_i h^ih_i +2\sum\limits_{\alpha\in \Delta^+} e_{-\alpha}e_\alpha+\sum\limits_{\alpha\in \Delta^+} \epsilon(\alpha)h_{\alpha}.
\end{equation}
We have
\begin{equation}\label{proof action casimir on highest weight 2}
\pi(\sum\limits_i h^ih_i)(v)=\sum\limits_i \lambda(h^i)\lambda(h_i)v=B_{\gg^*}(\lambda,\lambda)v.
\end{equation}
Since $e_\alpha\in \gg^+$ annihilates $v$, we have
\begin{equation}\label{proof action casimir on highest weight 3}
\pi(2\sum\limits_{\alpha\in \Delta^+} e_{-\alpha}e_\alpha)(v)=0.
\end{equation}
Finally, we have
$$\pi(\sum\limits_{\alpha\in \Delta^+} \epsilon(\alpha)h_{\alpha})(v)=\sum\limits_{\alpha\in \Delta^+} \epsilon(\alpha)\lambda(h_{\alpha})v$$
and since
\begin{align*}
    2B_{\gg^*}(\rho,\lambda)&=\sum\limits_i\sum\limits_{\alpha\in\Delta^+}\epsilon(\alpha)\alpha(h^i)\lambda(h_i)=\sum\limits_i\sum\limits_{\alpha\in\Delta^+}\epsilon(\alpha)B_{\gg}(h_\alpha,h^i)B_{\gg}(h_\lambda,h_i)\\
    &=\sum\limits_{\alpha\in\Delta^+}\epsilon(\alpha)B_{\gg}(h_\lambda,\sum\limits_iB_{\gg}(h_\alpha,h^i)h_i)=\sum\limits_{\alpha\in\Delta^+}\epsilon(\alpha)B_{\gg}(h_\lambda,h_\alpha)\\
    &=\sum\limits_{\alpha\in\Delta^+}\epsilon(\alpha)\lambda(h_\alpha)
\end{align*}
we obtain that
\begin{equation}\label{proof action casimir on highest weight 4}
\pi(\sum\limits_{\alpha\in \Delta^+} \epsilon(\alpha)h_{\alpha})(v)=2B_{\gg^*}(\rho,\lambda)v.
\end{equation}
From Equations \eqref{proof action casimir on highest weight 1}, \eqref{proof action casimir on highest weight 2}, \eqref{proof action casimir on highest weight 3} and \eqref{proof action casimir on highest weight 4} we obtain that
$$\pi(\Omega(\gg))(v)=B_{\gg^*}(\lambda+2\rho,\lambda)v.$$
\end{proof}

\section{Representations of colour Lie type and the strange Freudenthal-de Vries formula}\label{section colour Lie type and FdV}

In this section, from a finite-dimensional $\epsilon$-orthogonal representation $\pi : \gg\rightarrow \so_{\epsilon}(V,(\phantom{v},\phantom{v}))$ of a finite-dimensional $\epsilon$-quadratic colour Lie algebra $\gg$, we show how to construct an $\epsilon$-quadratic colour Lie algebra structure on $\gg\oplus V$ using a slight different point of view than in \cite{meyer2019kostant}. Using this characterisation we state a strange Freudenthal and de Vries formula (\cite{FreudenthalDeVries69}) for basic colour Lie algebras.
\vspace{0.2cm}

The $\epsilon$-Clifford algebra $C_{\epsilon}(V,(\phantom{v},\phantom{v}))$ is $\mathbb{Z}_2\times \Gamma$-graded and so consider the commutation factor $\tilde{\epsilon}$ of $\mathbb{Z}_2\times\Gamma$ given by
$$\tilde{\epsilon}((a_1,\gamma_1),(a_2,\gamma_2)):=(-1)^{a_1a_2}\epsilon(\gamma_1,\gamma_2) \qquad \forall (a_1,\gamma_1),(a_2,\gamma_2)\in \mathbb{Z}_2\times\Gamma.$$
Then $C_{\epsilon}(V,(\phantom{v},\phantom{v}))$ is a $(\mathbb{Z}_2\times\Gamma,\tilde{\epsilon})$-colour Lie algebra for the bracket
$$\lbrace x,y \rbrace:=x\cdot y-\tilde{\epsilon}(x,y)y\cdot x \qquad \forall x,y \in C_{\epsilon}(V,(\phantom{v},\phantom{v})).$$
Define $\pi_* : \gg\rightarrow C_{\epsilon}(V,(\phantom{v},\phantom{v}))$ by 
$$\pi_*=-\frac{1}{2}Q_2\circ \mu_{can}^{-1}\circ \pi,$$
where $Q_2 :\Lambda^2_{\epsilon}(V) \rightarrow C_{\epsilon}(V,(\phantom{v},\phantom{v}))$ is the quantisation map of Definition \ref{df quantisation} and where $\mu_{can}:\Lambda_{\epsilon}^2(V)\rightarrow \so_{\epsilon}(V,(\phantom{v},\phantom{v}))$ is the isomorphism of Proposition \ref{pp moment map canonique}. We have
\begin{equation}\label{eq pi in clifford}
    \pi_*(x)=\frac{1}{4}\sum\limits_{i}\pi(x)(e^i) e_i \qquad \forall x\in \gg
\end{equation}
where $\lbrace e_i\rbrace$ is a basis of $V$ and $\lbrace e^i \rbrace$ its dual basis. The map $\pi_*$ is a colour Lie algebra morphism and so we extend it to $\pi_* : U_{\epsilon}(\gg)\rightarrow C_{\epsilon}(V,(\phantom{v},\phantom{v}))$ using the universal property of the $\epsilon$-universal enveloping algebra $U_{\epsilon}(\gg)$.

\begin{thm} \label{thm N(mu+phi)=0}
Let $\pi : \gg \rightarrow \so_{\epsilon}(V,(\phantom{v} , \phantom{v}))$ be a finite-dimensional $\epsilon$-orthogonal representation of a finite-dimensional $\epsilon$-quadratic colour Lie algebra $(\gg,B_{\gg})$ and let $\mu : \Lambda^2_{\epsilon}(V)\rightarrow \gg$ be its moment map. Let $\phi : \Lambda^2_{\epsilon}(V) \rightarrow V$ be of degree $0$ and satisfy
\begin{align}
\pi(x)(\phi(v,w))&=\phi(\pi(x)(v),w)+\epsilon(x,v)\phi(v,\pi(x)(w)) \quad &\forall x \in \gg, ~ \forall v,w \in V \label{phi g-inv}, \\
(\phi(u,v),w)&=-\epsilon(u,v)(v,\phi(u,w)) \quad &\forall u,v,w \in V \label{phi ()-inv}.
\end{align}
Let $\gt:=\gg\oplus V$, let $B_{\gt}:=B_{\gg}\perp (\phantom{v},\phantom{v})$ and let $\lbrace \phantom{v},\phantom{v} \rbrace \in \Lambda_{\epsilon}^2(\gt)\rightarrow \gt$ be the unique map which extends the bracket of $\gg$, the action of $\gg$ on $V$ and such that
$$\lbrace v,w\rbrace=\mu(v,w)+\phi(v,w) \qquad \forall v,w \in V.$$
Define the cubic term $\psi \in C_{\epsilon}(V,(\phantom{v},\phantom{v}))$ to be the element corresponding to the $\epsilon$-alternating trilinear map $(u,v,w)\mapsto\frac{1}{2}(u,\phi(v,w))$ under the isomorphism of vector spaces between $\Lambda^3_{\epsilon}(V)^*$ and $\Lambda^3_{\epsilon}(V)$ of Proposition \ref{pp Lambda dual isom to Lambda} and under the quantisation map $Q_3 :\Lambda^3_{\epsilon}(V) \rightarrow C_{\epsilon}(V,(\phantom{v},\phantom{v}))$.
\vspace{0.2cm}

\noindent
The following are equivalent:
\begin{enumerate}[label=\alph*)]
\item $(\gt,B_{\gt},\lbrace \phantom{v},\phantom{v} \rbrace)$ is an $\epsilon$-quadratic colour Lie algebra.
\item $\pi_*(\Omega(\gg))+\psi^2\in k.$
\end{enumerate}
\end{thm}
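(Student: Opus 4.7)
My plan is to reduce the statement that $(\gt, B_{\gt}, \lbrace\phantom{x},\phantom{x}\rbrace)$ is an $\epsilon$-quadratic colour Lie algebra to a single identity on $V \times V \times V$, and to recognise this identity, via the quantisation map and the formula \eqref{eq pi in clifford}, as the condition $\pi_*(\Omega(\gg)) + \psi^2 \in k \subset C_{\epsilon}(V, (\phantom{v},\phantom{v}))$.

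First I would note that the bracket is $\epsilon$-antisymmetric by construction and that $B_{\gt}$ is $\epsilon$-symmetric and non-degenerate by definition. Componentwise in the bigrading $\gt = \gg \oplus V$, the ad-invariance of $B_{\gt}$ reduces to ad-invariance of $B_{\gg}$, to the hypothesis $\pi(x) \in \so_{\epsilon}(V,(\phantom{v},\phantom{v}))$, to the defining relation of $\mu$, and to condition \eqref{phi ()-inv}; it is therefore automatic. Decomposing the Jacobi identity according to the number of arguments in $V$, the cases of zero, one and two such arguments reduce respectively to Jacobi in $\gg$, to $\pi$ being a morphism of colour Lie algebras, and to the combination of the $\gg$-equivariance of $\mu$ (itself a consequence of the previous data) with condition \eqref{phi g-inv}. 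Everything thus reduces to the Jacobi identity on $V^3$.

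For $u,v,w \in V$, this identity has a $\gg$-component equal to $\sum_{\epsilon\text{-cyc}} \mu(u,\phi(v,w))$ and a $V$-component
$$N(u,v,w) := \sum_{\epsilon\text{-cyc}} \Big(\phi(u,\phi(v,w)) - \epsilon(u,v+w)\,\pi(\mu(v,w))(u)\Big).$$
Pairing the $\gg$-component with any $x \in \gg$ via $B_{\gg}$, and using the definition of $\mu$ together with \eqref{phi ()-inv}, yields the same $\epsilon$-antisymmetric tetralinear form on $V^4$ as pairing $N(u,v,w)$ with a fourth vector $z \in V$ via $(\phantom{v},\phantom{v})$. By non-degeneracy of $B_{\gt}$ the two components vanish simultaneously, so $a)$ is equivalent to the single identity $N \equiv 0$.

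The heart of the argument is to recognise $N \equiv 0$ inside the $\epsilon$-Clifford algebra. Using \eqref{eq pi in clifford} I would expand
$$\pi_*(\Omega(\gg)) = \tfrac{1}{16}\sum_{i,k,l}\pi(x^i)(e^k)\,e_k\,\pi(x_i)(e^l)\,e_l,$$
and normal-order via the $\epsilon$-Clifford relations to obtain a decomposition into pieces of filtered degrees $4$, $2$ and $0$; an analogous decomposition for $\psi^2 = \tfrac{1}{2}\lbrace \psi,\psi\rbrace$ (a priori of filtered degrees $6$, $4$, $2$ and $0$) follows from the explicit formula of Proposition \ref{pp Lambda dual isom to Lambda} applied to the trilinear form $(u,v,w) \mapsto \tfrac{1}{2}(u,\phi(v,w))$. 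I expect the filtered-degree-$6$ part of $\psi^2$ to vanish automatically from \eqref{phi ()-inv}, the filtered-degree-$4$ parts of $\psi^2$ and $\pi_*(\Omega(\gg))$ to cancel by a Fierz-type identity again drawing on \eqref{phi ()-inv}, and their filtered-degree-$2$ parts to combine into the element whose principal symbol, under the isomorphisms of Propositions \ref{pp moment map canonique} and \ref{pp Lambda dual isom to Lambda}, corresponds to $N$. Consequently $\pi_*(\Omega(\gg)) + \psi^2 \in k$ if and only if $N \equiv 0$, giving the equivalence. The main obstacle is the combinatorial $\epsilon$-sign bookkeeping in the normal-ordering, for which the formulas of Section \ref{section multilinear} (notably the $p(\sigma;\cdots)$-cocycle) are indispensable.
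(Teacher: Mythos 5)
Your overall strategy is the right one, and your reduction to the Jacobi identity on $V^3$ matches the paper's. But two of your key intermediate claims are not correct, and one of them would genuinely derail the argument.

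\textbf{On the $\gg$-component of the Jacobi on $V^3$.} You claim that pairing the $\gg$-component $\sum_{\rm cyc}\mu(u,\phi(v,w))$ with $x\in\gg$ via $B_\gg$ gives ``the same $\epsilon$-antisymmetric tetralinear form on $V^4$'' as pairing $N$ with $z\in V$, so that the two components vanish simultaneously. This cannot be right: one pairing is a form on $V^3\times\gg$ (linear in $x\in\gg$) and the other is a form on $V^4$, and no mechanism in the argument converts one into the other. Moreover the first involves $\phi$ twice while the second also involves $\mu$. What the paper actually shows is that the $\gg$-component of the Jacobi on $V^3$ vanishes \emph{identically}: writing
$B_\gg(\lbrace u,\lbrace v,w\rbrace\rbrace,x) = -\epsilon(u,v+w)\epsilon(u+v+w,x)(\pi(x)(\phi(v,w)),u)$
and applying the $\gg$-equivariance of $\phi$, that is \eqref{phi g-inv} (not \eqref{phi ()-inv}), one sees the cyclic sum cancels. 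So the reduction to $N\equiv 0$ is a consequence of \eqref{phi g-inv} alone, not a simultaneity phenomenon. Your conclusion is correct but the argument you give for it is not.

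\textbf{On the filtered-degree allocation in the Clifford algebra.} You expect the filtered degree-$4$ parts of $\psi^2$ and $\pi_*(\Omega(\gg))$ to cancel by a Fierz-type identity, and their degree-$2$ parts to encode $N$. This is off by one filtered degree, and the error is not cosmetic. The obstruction $N:V^3\to V$, when paired against a fourth vector, is a quadrilinear $\epsilon$-alternating form on $V$, which corresponds to an element of $\Lambda^4_\epsilon(V)$ and hence to a filtered degree-$4$ element of the Clifford algebra. So $N$ cannot be encoded in degree $2$. In fact the degree-$4$ parts of $\pi_*(\Omega(\gg))$ and $\psi^2$ do \emph{not} cancel automatically: the paper's computation culminates in the identity
$$\pi_*(\Omega(\gg))+\psi^2=\tfrac{1}{2}Q_4(\tilde{J})+\text{(scalar)},$$
where $\tilde J\in\Lambda^4_\epsilon(V)$ encodes the Jacobi obstruction via Proposition \ref{pp Lambda dual isom to Lambda}. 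The degree-$4$ parts sum to $\tfrac12 Q_4(\tilde J)$, and the condition $\pi_*(\Omega(\gg))+\psi^2\in k$ is \emph{precisely} the vanishing of this degree-$4$ element; if the degree-$4$ parts cancelled automatically the theorem would be vacuous. (Your other sub-claim, that the degree-$6$ part of $\psi^2$ vanishes, is correct, but it follows from $\epsilon$-graded antisymmetry in $\Lambda^6_\epsilon(V)$ -- since $\tilde\psi$ has $\mathbb{Z}$-degree $3$ and $\Gamma$-degree $0$ we get $\tilde\psi\wedge\tilde\psi=-\tilde\psi\wedge\tilde\psi$ -- rather than from \eqref{phi ()-inv}.) A workable route, and the one the paper takes, is to write the quadrilinear obstruction $J$ explicitly, transport it to $\Lambda^4_\epsilon(V)$ and then to $C_\epsilon(V)$ via Proposition \ref{pp Lambda dual isom to Lambda}, expand $Q_4(\tilde J)$ using the Clifford relations, and match it term by term with explicit expansions of $\pi_*(\Omega(\gg))$ and $\psi^2$.
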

\vspace{0.1cm}

\begin{proof} 
For $u,v,w\in\gt$, if $u,v$ or $w$ is an element of $\gg$ then a straightforward calculation shows that
$$\epsilon(w,u)\lbrace \lbrace u,v \rbrace , w \rbrace+\epsilon(u,v)\lbrace \lbrace v,w \rbrace , u \rbrace +\epsilon(v,w)\lbrace \lbrace w,u \rbrace , v \rbrace=0.$$
Let $u,v,w \in V$ and $x\in \gg$. We have
\begin{align*}
B_{\gg}(\lbrace u , \lbrace v,w \rbrace \rbrace,x)&=-\epsilon(u,v+w)\epsilon(u+v+w,x)(\pi(x)(\phi (v,w)), u)
\end{align*}
and by Equation \eqref{phi g-inv} we obtain
\begin{align*}
B_{\gg}(\lbrace u , \lbrace v,w \rbrace \rbrace,x)&=-\epsilon(u,v+w)B_{\gg}( \lbrace v,\lbrace w,u\rbrace \rbrace,x)+\epsilon(u,v+w)\epsilon(v,w)B_{\gg}(\lbrace w,\lbrace v,u\rbrace \rbrace,x).
\end{align*}
Hence,
$$B_{\gg}(\epsilon(w,u)\lbrace \lbrace u,v \rbrace , w \rbrace+\epsilon(u,v)\lbrace \lbrace v,w \rbrace , u \rbrace +\epsilon(v,w)\lbrace \lbrace w,u \rbrace , v \rbrace,x)=0.$$
It follows that $\gt$ is a colour Lie algebra if and only if
$$(\epsilon(v_3,v_1)\lbrace \lbrace v_1,v_2 \rbrace , v_3 \rbrace+\epsilon(v_1,v_2)\lbrace \lbrace v_2,v_3 \rbrace , v_1 \rbrace +\epsilon(v_2,v_3)\lbrace \lbrace v_3,v_1 \rbrace , v_2 \rbrace,v_4)=0 \quad \forall v_1,v_2,v_3,v_4 \in V.$$
Define the map $J\in \Lambda_{\epsilon}(V)^*$ by
\begin{align*}
J(v_1,v_2,v_3,v_4)&=\epsilon(v_1,v_3)(\epsilon(v_3,v_1)\lbrace \lbrace v_1,v_2 \rbrace,v_3\rbrace+\epsilon(v_1,v_2)\lbrace \lbrace v_2,v_3\rbrace , v_1\rbrace+\epsilon(v_2,v_3)\lbrace \lbrace v_3,v_1\rbrace,v_2\rbrace,v_4) ~ \forall v_1,v_2,v_3,v_4\in V
\end{align*}
and $\tilde{J}\in \Lambda_{\epsilon}(V)$ to be the corresponding element of $J$ under the isomorphism of Proposition \ref{pp Lambda dual isom to Lambda}. By Proposition \ref{pp Lambda dual isom to Lambda}, we have
\begin{align*}
Q_4(\tilde{J})&=\frac{1}{4!}\sum\limits_{i,j,n,m}\tilde{J}(e^i\wedge e^j\wedge e^n\wedge e^m)e_me_ne_je_i
\end{align*}
and so
$$Q_4(\tilde{J})=\frac{1}{4!}\sum\limits_{i,j,n,m}\Big(\lbrace \lbrace e^i,e^j \rbrace,e^n\rbrace+\epsilon(e_i,e_j+e_n)\lbrace \lbrace e^j,e^n\rbrace , e^i\rbrace+\epsilon(e_i+e_j,e_n)\lbrace \lbrace e^n,e^i\rbrace,e^j\rbrace,e^m\Big)e_me_ne_je_i.$$
Since
$$e_ne_je_i=\epsilon(e_j+e_n,e_i)e_ie_ne_j-2\epsilon(e_j,e_i)(e_n,e_i)e_j+2(e_j,e_i)e_n,$$
we have
\begin{align*}
&\sum\limits_{i,j,n,m}\epsilon(e_i,e_j+e_n)(\lbrace \lbrace e^j,e^n\rbrace , e^i\rbrace,e^m)e_me_ne_je_i\\
&=\sum\limits_{i,j,n,m}(\lbrace \lbrace e^i,e^j \rbrace,e^n\rbrace,e^m)e_me_ne_je_i-4\sum\limits_{j,n}(\mu(e^j,e^n)(e_n)+\phi(\phi(e^j,e^n),e_n))e_j
\end{align*}
and since $e_ne_j=-\epsilon(e_n,e_j)e_je_n+2(e_n,e_j)$ we also have
\begin{align*}
&\sum\limits_{i,j,n,m}\epsilon(e_i+e_j,e_n)(\lbrace \lbrace e^n,e^i\rbrace,e^j\rbrace,e^m)e_me_ne_je_i=-\sum\limits_{i,j,n,m}\epsilon(e_j,e_n)(\lbrace \lbrace e^i,e^n\rbrace,e^j\rbrace,e^m)e_me_ne_je_i\\
&=\sum\limits_{i,j,n,m}(\lbrace \lbrace e^i,e^j \rbrace,e^n\rbrace,e^m)e_me_ne_je_i-2\sum\limits_{j,n}(\mu(e^j,e^n)(e_n)+\phi(\phi(e^j,e^n),e_n))e_j.
\end{align*}
Hence we have
\begin{equation*}
    Q_4(\tilde{J})=\frac{1}{8}\sum\limits_{i,j,n,m}(\lbrace \lbrace e^i,e^j \rbrace,e^n\rbrace,e^m)e_me_ne_je_i-\frac{1}{4}\sum\limits_{j,n}(\mu(e^j,e^n)(e_n)+\phi(\phi(e^j,e^n),e_n))e_j.
\end{equation*}
We have
\begin{align*}
\sum\limits_{j,n}(\mu(e^j,e^n)(e_n)+\phi(\phi(e^j,e^n),e_n))e_j=&Q_2\Big(\sum\limits_{j,n}(\mu(e^j,e^n)(e_n)+\phi(\phi(e^j,e^n),e_n))\wedge e_j\Big)\\
&+\sum\limits_{j,n}\Big((\mu(e^j,e^n)(e_n)+\phi(\phi(e^j,e^n),e_n)),e_j\Big)
\end{align*}
and since
$$\sum\limits_{j,n}(\mu(e^j,e^n)(e_n)+\phi(\phi(e^j,e^n),e_n))\wedge e_j=0$$
we obtain
\begin{equation}\label{formula proof rep of colour Lie type}
    Q_4(\tilde{J})=\frac{1}{8}\sum\limits_{i,j,n,m}(\lbrace \lbrace e^i,e^j \rbrace,e^n\rbrace,e^m)e_me_ne_je_i-\frac{1}{4}\sum\limits_{j,n}\Big(\mu(e^j,e^n)(e_n)+\phi(\phi(e^j,e^n),e_n),e_j\Big).
\end{equation}
On the other hand, we have
\begin{align*}
\pi_*(\Omega(\gg))&=\frac{1}{16}\sum\limits_{k}\sum\limits_{i,j,n,m}(\pi(x^k)(e^j),e^i)(\pi(x_k)(e^m),e^n)e_ie_je_ne_m\\
&=\frac{1}{16}\sum\limits_{k}\sum\limits_{i,j,n,m}B_{\gg}(x^k,\mu(e^j,e^i))B_{\gg}(x_k,\mu(e^m,e^n))e_ie_je_ne_m\\
&=\frac{1}{16}\sum\limits_{i,j,n,m}B_{\gg}\Big(\sum\limits_{k}B_{\gg}(x_k,\mu(e^m,e^n))x^k,\mu(e^j,e^i)\Big)e_ie_je_ne_m\\
&=\frac{1}{16}\sum\limits_{i,j,n,m}B_{\gg}(\mu(e^m,e^n),\mu(e^j,e^i))e_ie_je_ne_m.
\end{align*}
Finally, since
\begin{equation*}
\psi=\frac{1}{12}\sum\limits_{i,j}\phi(e^i,e^j)e_je_i,
\end{equation*}
one can show that
\begin{equation*}
\psi^2=\frac{1}{16}\sum\limits_{i,j,k}(\phi(\phi(e^i,e^j),e^k),e^l)e_le_ke_je_i-\frac{1}{12}\sum\limits_{k,l}(\phi(\phi(e^k,e^l),e_l),e_k)
\end{equation*}
and so
\begin{equation*}
\pi_*(\Omega(\gg))+\psi^2=\frac{1}{16}\sum\limits_{i,j,n,m}(B_{\gg}(\mu(e^m,e^n)(e^j),e^i)+(\phi(\phi(e^m,e^n),e^j),e^i))e_ie_je_ne_m-\frac{1}{12}\sum\limits_{k,l}(\phi(\phi(e^k,e^l),e_l),e_k).
\end{equation*}
Using \eqref{formula proof rep of colour Lie type} we obtain
\begin{equation}\label{formula proof rep of colour Lie type 2}
\pi_*(\Omega(\gg))+\psi^2=\frac{1}{2}Q_4(\tilde{J})+\frac{1}{8}\sum\limits_{j,n}(\mu(e^j,e^n)(e_n),e_j)+\frac{1}{24}\sum\limits_{j,n}(\phi(\phi(e^j,e^n),e_n)),e_j).
\end{equation}
Since $\gt$ is a colour Lie algebra if and only if $J\equiv 0$, it follows from \eqref{formula proof rep of colour Lie type 2} that this is equivalent to
$$\pi_*(\Omega(\gg))+\psi^2\in k.$$
\end{proof}

If there exist a cubic term $\psi$ such that one of the equivalent conditions of this Theorem is satisfied, we say that the representation $\pi : \gg \rightarrow \so_{\epsilon}(V,(\phantom{v} , \phantom{v}))$ is of colour Lie type. If one of this conditions is satisfied for $\psi=0$, we say that the representation is of colour $\mathbb{Z}_2$-Lie type. The scalar involved in this Theorem can be more explicitly characterised.

\begin{cor}\label{cor formule rho(cas)+psi carre}
If $\pi : \gg \rightarrow \so_{\epsilon}(V,(\phantom{v} , \phantom{v}))$ is of colour Lie type for a cubic term $\psi$, then
$$\pi_*(\Omega(\gg))+\psi^2=\frac{1}{24}\Big(Tr_{\epsilon}(ad_{\gt}(\Omega(\gt)))-Tr_{\epsilon}(ad_{\gg}(\Omega(\gg)))\Big).$$
\end{cor}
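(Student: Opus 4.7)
The plan is to combine the key intermediate formula from the proof of Theorem \ref{thm N(mu+phi)=0} with a direct trace computation on $\gt=\gg\oplus V$. Because $V$ is of colour Lie type, $\gt$ is a colour Lie algebra, so the obstruction map $J$ appearing in that proof vanishes identically; hence $Q_4(\tilde J)=0$ in equation \eqref{formula proof rep of colour Lie type 2}, which collapses to
$$\pi_*(\Omega(\gg))+\psi^2 \;=\; \tfrac{1}{8}\sum_{j,n}(\mu(e^j,e^n)(e_n),e_j)\;+\;\tfrac{1}{24}\sum_{j,n}(\phi(\phi(e^j,e^n),e_n),e_j).$$
It therefore suffices to prove the trace identity
$$Tr_{\epsilon}(ad_{\gt}(\Omega(\gt)))-Tr_{\epsilon}(ad_{\gg}(\Omega(\gg))) \;=\; 3\sum_{j,n}(\mu(e^j,e^n)(e_n),e_j)\;+\;\sum_{j,n}(\phi(\phi(e^j,e^n),e_n),e_j).$$

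To establish this, I would use the combined dual basis $\{x_i\}\cup\{e_j\}$ of $\gt$ to write $\Omega(\gt)=\Omega(\gg)+\sum_j e^j e_j$ in $U_{\epsilon}(\gt)$ and split each $\epsilon$-trace across $\gt=\gg\oplus V$. The operator $ad_{\gt}(\Omega(\gg))$ restricts to $ad_{\gg}(\Omega(\gg))$ on $\gg$ (cancelling $-Tr_{\epsilon}(ad_{\gg}(\Omega(\gg)))$) and to $\pi(\Omega(\gg))$ on $V$, so the problem reduces to computing $Tr_{\epsilon}(\pi(\Omega(\gg)))$ together with $\sum_j Tr_{\epsilon}(ad_{\gt}(e^j)ad_{\gt}(e_j))$, the latter itself being split over $\gg$ and $V$. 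Every resulting diagonal block is then evaluated using the moment-map identity $B_{\gg}(x,\mu(v,w))=(\pi(x)(v),w)$, the expansion $\pi(x)(v)=\sum_n B_{\gg}(x,\mu(v,e^n))e_n$, the orthogonality relation $(\pi(x)(v),w)+\epsilon(x,v)(v,\pi(x)(w))=0$, the $\gg$-contraction identity $\sum_i B_{\gg}(x_i,u)B_{\gg}(x^i,w)=B_{\gg}(u,w)$, and the self-duality relation $\sum_j\epsilon(e_j)e_j\otimes e^j=\sum_j e^j\otimes e_j$ from Remark \ref{rem Id_v in V tens V}.

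A careful execution then shows that each of $Tr_{\epsilon}(\pi(\Omega(\gg)))$, the $\gg$-trace of $\sum_j ad_{\gt}(e^j)ad_{\gt}(e_j)$, and the $\mu$-part of its $V$-trace (arising from the action $\pi(\mu(e_j,\cdot))$) reduces, after $\epsilon$-antisymmetry of $\mu$ and relabelling, to a single copy of $\sum_{j,n}(\mu(e^j,e^n)(e_n),e_j)$, which accounts for the factor $3$. The remaining $\phi$-contribution, produced by the composition $\phi(e^j,\phi(e_j,\cdot))$ on $V$, becomes $\sum_{j,n}(\phi(\phi(e^j,e^n),e_n),e_j)$ after applying \eqref{phi ()-inv} together with the $\epsilon$-antisymmetry of $\phi$. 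The main obstacle is the systematic management of $\epsilon$-signs arising when swapping $e_j\leftrightarrow e^j$ and re-indexing across the three separate computations; this is handled uniformly by the two self-duality identities above, which reduce every mixed-basis sum to a canonical basis-independent contraction.
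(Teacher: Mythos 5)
Your proposal is correct and follows essentially the same route as the paper: both arguments start from the identity \eqref{formula proof rep of colour Lie type 2} with $Q_4(\tilde J)=0$, then reduce the trace difference $Tr_{\epsilon}(ad_{\gt}(\Omega(\gt)))-Tr_{\epsilon}(ad_{\gg}(\Omega(\gg)))$ by splitting $\Omega(\gt)=\Omega(\gg)+\sum_j e^j e_j$ and decomposing each $\epsilon$-trace over the blocks $\gg$ and $V$, showing that the three $\mu$-contributions coincide and that the remaining $\phi$-composition produces the second sum. The paper simply expands the three block-traces directly rather than naming them as you do, but the decomposition and the identities invoked (moment-map defining relation, $\epsilon$-antisymmetry of $\mu$ and $\phi$, and the dual-basis self-duality from Remark \ref{rem Id_v in V tens V}) are the same.
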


\begin{proof}
We have
\begin{align*}
    Tr_{\epsilon}(ad_{\gt}(\Omega(\gt)))-Tr_{\epsilon}(ad_{\gg}(\Omega(\gg)))&=\sum\limits_i B_{\gg}(ad(\Omega(\gt))(x^i),x_i)+\sum\limits_i (ad(\Omega(\gt))(e^i),e_i)-\sum\limits_i B_{\gg}(ad(\Omega(\gg))(x^i),x_i)\\
    &=\sum\limits_{i,j}B_{\gg}(\lbrace e^j,\lbrace e_j,x^i\rbrace \rbrace,x_i)+\sum\limits_{i,j}(\lbrace x^j,\lbrace x_j,e^i\rbrace\rbrace,e_i)+\sum\limits_{i,j}(\lbrace e^j,\lbrace e_j,e^i\rbrace\rbrace,e_i)\\
    &=2\sum\limits_{i,j}B_{\gg}(\lbrace e^j,\lbrace e_j,x^i\rbrace \rbrace,x_i)+\sum\limits_{i,j}(\lbrace e^j,\lbrace e_j,e^i\rbrace\rbrace,e_i).
\end{align*}
Since
\begin{align*}
    \sum\limits_{i,j}(\lbrace e^i,\lbrace e_i,x^j\rbrace\rbrace,x_j)&=\sum\limits_{i,j}(\mu(e^j,e^i)(e_i),e_j)
\end{align*}
and
\begin{align*}
    \sum\limits_{i,j}(\lbrace e^j,\lbrace e_j,e^i\rbrace\rbrace,e_i)&=\sum\limits_{i,j}(\mu(e^j,e^i)(e_i),e_j)+\sum\limits_{i,j}(\phi(e^j,\phi(e_j,e^i)),e_i)
\end{align*}
we have
\begin{align*}
    Tr_{\epsilon}(ad_{\gt}(\Omega(\gt)))-Tr_{\epsilon}(ad_{\gg}(\Omega(\gg)))&=3\sum\limits_{i,j}(\mu(e^j,e^i)(e_i),e_j)+\sum\limits_{i,j}(\phi(e^j,\phi(e_j,e^i)),e_i).
\end{align*}
Using \eqref{formula proof rep of colour Lie type 2}, we obtain
$$\pi_*(\Omega(\gg))+\psi^2=\frac{1}{24}\Big(Tr_{\epsilon}(ad_{\gt}(\Omega(\gt)))-Tr_{\epsilon}(ad_{\gg}(\Omega(\gg)))\Big).$$
\end{proof}

Here is a particular family of examples of representations of colour $\mathbb{Z}_2$-Lie type.

\begin{pp} \label{pp ad(Cas) of adjoint representation}
Let $\gg$ be a finite-dimensional colour Lie algebra such that its $\epsilon$-Killing form $B_{\gg}$ is non-degenerate. The $\epsilon$-orthogonal adjoint representation $ad:\gg\rightarrow\so_{\epsilon}(\gg,B_{\gg})$ is of colour $\mathbb{Z}_2$-Lie type and
$$ad_*(\Omega(\gg))=\frac{1}{8}dim_{\epsilon}(\gg).$$
\end{pp}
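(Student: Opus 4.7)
The plan is to realise $\gt=\gg\oplus V$ (with $V=\gg$ equipped with $B_\gg$) as a concrete colour Lie algebra, so that hypothesis a) of Theorem \ref{thm N(mu+phi)=0} holds with $\phi=0$, and then to read off the scalar using Corollary \ref{cor formule rho(cas)+psi carre}. The ad-invariance of $B_\gg$ is exactly the statement that $ad(\gg)\subseteq\so_\epsilon(\gg,B_\gg)$, so $ad$ is an $\epsilon$-orthogonal representation. Combining ad-invariance with the $\epsilon$-symmetry of $B_\gg$ yields $B_\gg(x,\lbrace v,w\rbrace_\gg)=B_\gg(\lbrace x,v\rbrace,w)$ for all $x,v,w\in\gg$, so by Definition \ref{df moment map} the associated moment map is $\mu(v,w)=\lbrace v,w\rbrace_\gg$. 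Taking $\phi=0$, conditions \eqref{phi g-inv}--\eqref{phi ()-inv} hold vacuously and the cubic term $\psi$ of the theorem vanishes; the bracket on $\gt$ is then determined by the bracket of $\gg$, the adjoint action of $\gg$ on $V$, and $\lbrace v,w\rbrace_\gt=\lbrace v,w\rbrace_\gg\in\gg$ for $v,w\in V$.

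I would then verify hypothesis a) by exhibiting the explicit linear isomorphism
\begin{equation*}
\Phi\colon\gt\longrightarrow \gg\oplus\gg,\qquad (x,v)\longmapsto (x+v,\,x-v),
\end{equation*}
and checking, by a short bilinear expansion using $\pi(x)(v)=\lbrace x,v\rbrace_\gg$ and $\mu(v,w)=\lbrace v,w\rbrace_\gg$, that $\Phi$ intertwines the bracket of $\gt$ with the componentwise bracket on $\gg\oplus\gg$. Since $\gg\oplus\gg$ is tautologically a colour Lie algebra, so is $\gt$, and Theorem \ref{thm N(mu+phi)=0} gives $ad_*(\Omega(\gg))+\psi^2=ad_*(\Omega(\gg))\in k$. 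In particular, $ad$ is of colour $\mathbb{Z}_2$-Lie type.

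To identify this scalar I apply Corollary \ref{cor formule rho(cas)+psi carre}, which gives
\begin{equation*}
ad_*(\Omega(\gg))=\tfrac{1}{24}\bigl(Tr_\epsilon(ad_\gt(\Omega(\gt)))-Tr_\epsilon(ad_\gg(\Omega(\gg)))\bigr).
\end{equation*}
Since $B_\gg$ is the $\epsilon$-Killing form, the defining relation $K(x,y)=Tr_\epsilon(ad(x)ad(y))$ together with duality of bases gives $Tr_\epsilon(ad_\gg(\Omega(\gg)))=\sum_i B_\gg(x^i,x_i)=dim_\epsilon(\gg)$. For $\gt$ I write $\Omega(\gt)=\sum_i x^ix_i+\sum_i e^ie_i$, with $\lbrace e_i\rbrace$ the same basis of $\gg$ viewed inside $V$ and $\lbrace e^i\rbrace$ its dual. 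Each operator $ad_\gt(x^i)ad_\gt(x_i)$ preserves the splitting $\gt=\gg\oplus V$ and acts as $ad_\gg(x^i)ad_\gg(x_i)$ on each summand, so the first sum contributes $2\,dim_\epsilon(\gg)$. Likewise, since both the adjoint action and the moment map agree with the bracket of $\gg$, the composition $ad_\gt(e^i)ad_\gt(e_i)$ also preserves the splitting and restricts to $ad_\gg(e^i)ad_\gg(e_i)$ on each summand, contributing another $2\,dim_\epsilon(\gg)$. Therefore $Tr_\epsilon(ad_\gt(\Omega(\gt)))=4\,dim_\epsilon(\gg)$ and $ad_*(\Omega(\gg))=\tfrac{1}{24}(4-1)dim_\epsilon(\gg)=\tfrac{1}{8}dim_\epsilon(\gg)$.

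The one step that demands care is the second trace computation: when rewriting $ad_\gt(e_i)\colon\gg\to V$ via the $\epsilon$-antisymmetry of the bracket of $\gt$, a sign $-\epsilon(e_i,x)$ appears, and one must check that it cancels against the corresponding sign produced by $ad_\gt(e^i)$ so that the composition really restricts to $ad_\gg(e^i)ad_\gg(e_i)$ on each summand. Once this routine $\epsilon$-sign bookkeeping is settled, every remaining step is a formal manipulation.
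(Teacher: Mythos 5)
Your proposal is correct, and the overall strategy matches the paper's: identify the moment map of the adjoint representation with the bracket, take $\phi=0$ so that $\psi=0$, observe that $\gt=\gg\oplus\gg$ is a colour Lie algebra, and then read off the scalar from Corollary~\ref{cor formule rho(cas)+psi carre} by computing the two $\epsilon$-traces. Two small packaging differences are worth noting. First, to see that $\gt$ is a colour Lie algebra you exhibit the degree-$0$ isomorphism $\Phi:(x,v)\mapsto(x+v,x-v)$ onto the componentwise direct sum $\gg\oplus\gg$; this is a clean alternative to the paper's direct assertion, and I verified it intertwines the brackets and respects the $\Gamma$-grading. Second, for $Tr_\epsilon(ad_{\gt}(\Omega(\gt)))$ you argue summand-by-summand that each $ad_{\gt}(x^i)ad_{\gt}(x_i)$ and each $ad_{\gt}(e^i)ad_{\gt}(e_i)$ preserves $\gt=\gg\oplus V$ and restricts to $ad_\gg(x^i)ad_\gg(x_i)$, giving $4\,dim_\epsilon(\gg)$; the paper instead notes that $B_{\gt}=\tfrac12 K_{\gt}$, hence $\Omega(\gt)=2\Omega_K(\gt)$, and uses $Tr_\epsilon(ad_{\gt}(\Omega_K(\gt)))=dim_\epsilon(\gt)=2\,dim_\epsilon(\gg)$. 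These are really the same trace calculation viewed at different levels of abstraction: proving $K_{\gt}=2B_{\gt}$ requires exactly your per-summand analysis. The $\epsilon$-sign cancellation you flag at the end does indeed work out: for $x\in\gg$ one has $ad_{\gt}(e_i)(x)=\lbrace e_i,x\rbrace_{\gt}=-\epsilon(e_i,x)\lbrace x,e_i\rbrace_\gg=\lbrace e_i,x\rbrace_\gg$, and similarly $ad_{\gt}(e^i)|_\gg$ lands back in $V$ via $\lbrace e^i,\cdot\rbrace_\gg$, so the composition on $\gg$ is $ad_\gg(e^i)ad_\gg(e_i)$; the check on the $V$-summand is symmetric.
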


\begin{proof}
The moment map $\mu$ of $ad:\gg\rightarrow\so_{\epsilon}(\gg,B_{\gg})$ satisfies to $\mu(u,v)=\lbrace u,v\rbrace$ for $u,v\in\gg$ and then $\gt=\gg\oplus\gg$ is a $\mathbb{Z}_2$-graded colour Lie algebra. From Theorem \ref{thm N(mu+phi)=0} and Corollary \ref{cor formule rho(cas)+psi carre} we have
$$ad_*(\Omega(\gg))=\frac{1}{24}\Big(Tr_{\epsilon}(ad_{\gt}(\Omega(\gt)))-Tr_{\epsilon}(ad_{\gg}(\Omega(\gg)))\Big).$$
The $\epsilon$-quadratic form $B_{\gg}$ is the $\epsilon$-Killing form of $\gg$ and then the $\epsilon$-quadratic form $B_{\gt}=B_{\gg}\perp B_{\gg}$ on $\gt$ satisfies to
$$B_{\gt}=\frac{1}{2}K_{\gt}$$
where $K_{\gt}$ is the $\epsilon$-Killing form of $\gt$. Hence, if $\Omega_K(\gt)\in U_{\epsilon}(\gt)$ is the Casimir element corresponding to $K_{\gt}$, we have 
$$\Omega(\gt)=2\Omega_K(\gt).$$
Thus,
$$Tr_{\epsilon}(ad_{\gt}(\Omega(\gt)))=2Tr_{\epsilon}(ad_{\gt}(\Omega_K(\gt)))=2dim_{\epsilon}(\gt)=4dim_{\epsilon}(\gg)$$
and so
$$ad_*(\Omega(\gg))=\frac{1}{8}dim_{\epsilon}(\gg).$$
\end{proof}

Comparing the scalar coming from the previous example with the action of the Casimir element on an oscillator representation we obtain a strange Freudenthal and de Vries formula for basic colour Lie algebras.

\begin{thm}\label{thm FdV formula}
Suppose that $char(k)=0$. Let $\gg$ be a finite-dimensional basic colour Lie algebra in the sense of Subsection \ref{subsection split CLA} and such that its $\epsilon$-Killing form $B_{\gg}$ is non-degenerate. We have
$$dim_{\epsilon}(\gg)=24B_{\gg^*}(\rho,\rho),$$
where $\rho$ is the Weyl vector of $\gg$ with respect to a Cartan subalgebra $\hh$ of $\gg$ and a choice of a full set of positive roots for the action of $ad(\hh)$.
\end{thm}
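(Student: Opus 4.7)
The plan is to combine Proposition \ref{pp ad(Cas) of adjoint representation} with the evaluation of the Casimir on the vacuum of an oscillator representation of $C_\epsilon(\gg,B_\gg)$. By Proposition \ref{pp roots spaces orthogonaux}, $\gg^+$ and $\gg^-$ are isotropic subspaces, each orthogonal to $\hh$ and dually paired by $B_\gg$. After extending scalars if necessary, I split $\hh=\hh^+\oplus\hh^-$ into maximal isotropic subspaces (using the one-dimensional piece $U$ of Definition \ref{def oscillator}\,b) when $\dim\hh$ is odd), and set $W:=\gg^+\oplus\hh^+$ and $W':=\gg^-\oplus\hh^-$. This is a decomposition of $\gg$ into maximal isotropic subspaces and so gives rise to an oscillator representation $m:C_\epsilon(\gg,B_\gg)\rightarrow\mathrm{End}(\Lambda_\epsilon(W))$. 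I will compute $(m\circ ad_*)(\Omega(\gg))(1)$ on the vacuum $1\in\Lambda^0_\epsilon(W)$ in two different ways.

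On the one hand, by Proposition \ref{pp ad(Cas) of adjoint representation} the element $ad_*(\Omega(\gg))=\tfrac{1}{8}dim_\epsilon(\gg)$ is a scalar in $C_\epsilon(\gg,B_\gg)$, hence $(m\circ ad_*)(\Omega(\gg))(1)=\tfrac{1}{8}dim_\epsilon(\gg)\cdot 1$. On the other hand, using formula \eqref{eq pi in clifford} in the basis of the proof of Proposition \ref{pp action Casimir on highest weight} together with the Clifford relation $e_\alpha e_{-\alpha}+\epsilon(\alpha)e_{-\alpha}e_\alpha=2\epsilon(\alpha)$, a direct calculation gives
$$ad_*(h)=-\tfrac{1}{2}\sum_{\alpha\in\Delta^+}\alpha(h)\,e_{-\alpha}e_\alpha+\rho(h)\qquad\forall h\in\hh.$$
Applying $m$ to $1$, with $m(e_\alpha)(1)=e_\alpha$ and $m(e_{-\alpha})(e_\alpha)=2(e_{-\alpha},e_\alpha)=2\epsilon(\alpha)$, yields $(m\circ ad_*)(h)(1)=-\rho(h)\cdot 1$. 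A similar but more involved expansion of $ad_*(e_{-\beta})$ for $\beta\in\Delta^+$ shows that $(m\circ ad_*)(e_{-\beta})(1)=0$: each Clifford monomial in $ad_*(e_{-\beta})$ either ends on the right with a factor lying in $W'$ (which annihilates the vacuum under $m$), or its remaining contraction pairs elements of orthogonal root spaces.

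Substituting these data into the expression
$$\Omega(\gg)=\sum_j h^jh_j+2\sum_{\alpha\in\Delta^+}e_{-\alpha}e_\alpha+\sum_{\alpha\in\Delta^+}\epsilon(\alpha)h_\alpha$$
from Equation \eqref{proof action casimir on highest weight 1}, and using the $U_\epsilon(\gg)$-identity $e_{-\alpha}e_\alpha=\epsilon(\alpha)e_\alpha e_{-\alpha}-\epsilon(\alpha)h_\alpha$ to convert the middle sum into operators that annihilate $1$ — the lowest-weight analogue of the computation in the proof of Proposition \ref{pp action Casimir on highest weight} — yields
$$(m\circ ad_*)(\Omega(\gg))(1)=B_{\gg^*}(-\rho-2\rho,-\rho)\cdot 1=3\,B_{\gg^*}(\rho,\rho)\cdot 1.$$
Comparing with the first evaluation gives $\tfrac{1}{8}dim_\epsilon(\gg)=3B_{\gg^*}(\rho,\rho)$, i.e.\ $dim_\epsilon(\gg)=24B_{\gg^*}(\rho,\rho)$. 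The main technical point of the argument is verifying that $1$ is annihilated by each $e_{-\beta}$ through $m\circ ad_*$, which reduces to a finite case-by-case inspection of the Clifford monomials appearing in $ad_*(e_{-\beta})$.
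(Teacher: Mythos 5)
Your argument is correct and follows the same overall strategy as the paper: evaluate $m\circ ad_*$ on the vacuum of an oscillator representation of $C_\epsilon(\gg,B_\gg)$, compare with the scalar $\frac{1}{8}\dim_\epsilon(\gg)$ from Proposition~\ref{pp ad(Cas) of adjoint representation}, and identify the other side via the Casimir-on-extremal-weight formula. The one place where you diverge is the choice of polarisation: you take $W=\gg^+\oplus\hh^+$, whereas the paper takes $W=\tilde W\oplus\gg^-$, $W^*=\tilde W^*\oplus\gg^+$. With the paper's choice, $\gg^+$ lies in $W^*$ and hence kills the vacuum under $m$, so $m_*(\gg)(1)$ is a \emph{highest}-weight module of weight $\rho$ and Proposition~\ref{pp action Casimir on highest weight} applies verbatim. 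With your choice, $\gg^-$ kills the vacuum, so $1$ is a \emph{lowest}-weight vector of weight $-\rho$; you then need the lowest-weight analogue $\pi(\Omega(\gg))=B_{\gg^*}(\mu-2\rho,\mu)\,\mathrm{Id}$, which is not stated in the paper but is established by the same manipulation of Equation~\eqref{proof action casimir on highest weight 1} (moving $e_{-\alpha}e_\alpha$ to $e_\alpha e_{-\alpha}$ instead of the other way round). Evaluating at $\mu=-\rho$ gives $3B_{\gg^*}(\rho,\rho)$, the same scalar, so the two routes are equivalent and the paper's convention simply avoids having to re-derive the Casimir formula. Your computations of $m_*(h)(1)=-\rho(h)\cdot 1$ and $m_*(e_{-\beta})(1)=0$ are correct for your choice of $W$; one small slip is the displayed Clifford relation, which with the paper's normalisation $B_\gg(e_\alpha,e_{-\alpha})=1$ should read $e_\alpha e_{-\alpha}+\epsilon(\alpha)e_{-\alpha}e_\alpha=2$ (equivalently $e_{-\alpha}e_\alpha+\epsilon(\alpha)e_\alpha e_{-\alpha}=2\epsilon(\alpha)$), but this does not propagate into your formula for $ad_*(h)$, which matches the paper's.
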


\begin{proof}
We use here the same notation as in Subsection \ref{subsection split CLA} and we can suppose without loss of generality that $k$ is algebraically closed.
\vspace{0.2cm}

From Proposition \ref{pp inner product between roots} we know that $\gg^+$ and $\gg^-$ are isotropic for $B_{\gg}$. Let $\tilde{W},\tilde{W}^*$ be maximal isotropic subspaces of $\hh$ and let $U$ be $\lbrace 0 \rbrace$ or a one-dimensional vector space of degree $0$ orthogonal to $W$ and $W'$ such that $\hh=\tilde{W}^*\oplus\tilde{W}\oplus U$. If we define $W=\tilde{W}\oplus \gg^-$ and $W^*=\tilde{W}^*\oplus\gg^+$ then $W$ and $W^*$ are maximal isotropic subspaces of $\gg$ and we can consider the oscillator representation $m:C_{\epsilon}(\gg,B_{\gg})\rightarrow {\rm End}(\Lambda_{\epsilon}(W))$ (see Definition \ref{def oscillator}). Composing $m$ with $ad_*:\gg\rightarrow C_{\epsilon}(V,B_{\gg})$ we obtain a representation $m_*: \gg\rightarrow {\rm End}(\Lambda_{\epsilon}(W))$.
\vspace{0.2cm}

For $h\in\hh$ we have that
\begin{align*}
    ad_*(h)&=\frac{1}{4}\sum\limits_{\alpha\in\Delta^+}(\lbrace h,e_{-\alpha}\rbrace e_\alpha+\epsilon(\alpha)\lbrace h,e_\alpha\rbrace e_{-\alpha})\\
    &=\frac{1}{4}\sum\limits_{\alpha\in\Delta^+}(-\alpha(h)e_{-\alpha}e_\alpha+\epsilon(\alpha)\alpha(h)e_\alpha e_{-\alpha})\\
    &=\frac{1}{2}\sum\limits_{\alpha\in\Delta^+}(-\alpha(h)e_{-\alpha}e_\alpha+\epsilon(\alpha)\alpha(h))
\end{align*}
and since $e_\alpha\in W^*$ we have
\begin{equation} \label{proof m(h)=rho(h)}
    m_*(h)(1)=\rho(h)1.
\end{equation}
Let $\gamma \in \Delta^+$. Let $\lbrace h_i \rbrace$ be a basis of $\hh$ and let $\lbrace h^i\rbrace$ be its dual basis. We have
\begin{align*}
    ad_*(e_\gamma)&=\frac{1}{4}\Big(\sum\limits_i\lbrace e_\gamma,h^i\rbrace h_i+\sum\limits_{\alpha\in\Delta^+}( \lbrace e_\gamma,e_{-\alpha}\rbrace e_\alpha+\epsilon(\alpha)\lbrace e_\gamma,e_{\alpha}\rbrace e_{-\alpha})\Big)
\end{align*}
and since $e_\alpha\in W^*$ we have
\begin{equation*}
        m_*(e_\gamma)(1)=\frac{1}{4}\Big(\sum\limits_i m(\lbrace e_\gamma,h^i\rbrace h_i)(1)+\sum\limits_{\alpha\in\Delta^+}\epsilon(\alpha)m(\lbrace e_\gamma,e_{\alpha}\rbrace e_{-\alpha})(1)\Big).
\end{equation*}
Using Proposition \ref{pp roots spaces orthogonaux}, we have
$$\sum\limits_{\alpha\in\Delta^+}\epsilon(\alpha)m(\lbrace e_\gamma,e_{\alpha}\rbrace e_{-\alpha})(1)=\frac{1}{2}\sum\limits_{\alpha\in \Delta^+}\epsilon(\alpha)B_{\gg}(\lbrace e_\gamma,e_\alpha\rbrace,e_{-\alpha})=0$$
and so
$$m_*(e_\gamma)(1)=\frac{1}{4}\sum\limits_i m(\lbrace e_\gamma,h^i\rbrace h_i)(1).$$
Let $\lbrace H_i \rbrace$ be a basis of $\tilde{W}^*$ and let $\lbrace H^i\rbrace$ be the basis of $\tilde{W}$ such that $B_{\gg}(H_i,H^j)=\delta_{ij}$. If $U\neq \lbrace 0 \rbrace$ let $u$ be a basis of $U$ such that $B_{\gg}(u,u)=1$. The basis $\lbrace h_i\rbrace$ can be chosen such that $\lbrace h_i\rbrace=\lbrace H_i\rbrace \cup \lbrace H^i \rbrace$ if $U=\lbrace 0 \rbrace$ or such that $\lbrace h_i\rbrace=\lbrace H_i\rbrace \cup \lbrace H^i \rbrace\cup \lbrace u\rbrace$ if $U\neq\lbrace 0 \rbrace$. Since $H_i\in W^*$ we have
\begin{align*}
    \frac{1}{4}\sum\limits_i m(\lbrace e_\gamma,H_i\rbrace H^i+\lbrace e_\gamma,H^i\rbrace H_i)(1)&=\frac{1}{4}\sum\limits_i m(\lbrace e_\gamma,H_i\rbrace H^i)(1)\\
    &=-\frac{1}{4}\sum\limits_i \gamma(H_i)m(e_\gamma H^i)(1)\\
    &=-\frac{1}{2}\sum\limits_i \gamma(H_i)B_{\gg}(e_\gamma,H^i)\\
    &=0.
\end{align*}
and if $U\neq \lbrace 0 \rbrace$ we also have
\begin{equation*}
\frac{1}{4}m(\lbrace e_\gamma,u\rbrace u)(1)=\frac{-\gamma(u)}{4}m(e_\gamma u)(1)=\frac{-\gamma(u)}{4}m(e_\gamma)(1)=0
\end{equation*}
and so
\begin{equation}\label{proof m(e_gamma)=0}
m_*(e_\gamma)(1)=0.
\end{equation}
Hence, from Equations \eqref{proof m(h)=rho(h)} and \eqref{proof m(e_gamma)=0}, we have that $m_*(\gg)(1)$ is a representation of $\gg$ of highest weight $\rho$. By Proposition \ref{pp action Casimir on highest weight} we have
$$m_*(\Omega(\gg))|_{m_*(\gg)(1)}=3B_{\gg^*}(\rho,\rho).$$
Since $m_*=m\circ ad_*$, by Proposition \ref{pp ad(Cas) of adjoint representation}, we also have
$$m_*(\Omega(\gg))|_{m_*(\gg)(1)}=\frac{1}{8}dim_\epsilon(\gg)$$
and so
$$dim_{\epsilon}(\gg)=24B_{\gg^*}(\rho,\rho).$$
\end{proof}

\section{The cubic Dirac operator}\label{section cubic Dirac operator}

In this section, we define cubic Dirac operators for $\epsilon$-orthogonal representations of $\epsilon$-quadratic colour Lie algebras and we prove a Parthasarathy formula (\cite{Parthasarathy72}).
\vspace{0.2cm}

Let $\pi:\gg\rightarrow\so_{\epsilon}(V,(\phantom{v},\phantom{v}))$ be a finite-dimensional $\epsilon$-orthogonal representation of colour Lie type of a finite-dimensional $\epsilon$-quadratic colour Lie algebra $(\gg,B_{\gg})$ and consider the associated $\epsilon$-quadratic colour Lie algebra $(\gt,B_{\gt})$ where $\gt=\gg\oplus V$ and $B_{\gt}=B_{\gg}\perp (\phantom{v},\phantom{v})$. The $\epsilon$-enveloping algebra $U_{\epsilon}(\gt)$ is $\Gamma$-graded, the $\epsilon$-Clifford algebra $C_{\epsilon}(V,(\phantom{v},\phantom{v}))$ is $\mathbb{Z}_2\times \Gamma$-graded, hence $U_{\epsilon}(\gt)\otimes C_{\epsilon}(V,(\phantom{v},\phantom{v}))$ is $\mathbb{Z}_2\times\Gamma$-graded and we consider the product
$$a\otimes b\cdot c\otimes d:=\tilde{\epsilon}(b,c)ac\otimes bd \qquad \forall a\otimes b,c\otimes d\in U_{\epsilon}(\gt)\otimes C_{\epsilon}(V,(\phantom{v},\phantom{v})).$$
Then $U_{\epsilon}(\gt)\otimes C_{\epsilon}(V,(\phantom{v},\phantom{v}))$ is a $(\mathbb{Z}_2\times\Gamma,\tilde{\epsilon})$-colour Lie algebra for the bracket
$$\lbrace x,y \rbrace:=x\cdot y-\tilde{\epsilon}(x,y)y\cdot x \qquad \forall x,y \in U_{\epsilon}(\gt)\otimes C_{\epsilon}(V,(\phantom{v},\phantom{v})).$$
We now define the cubic Dirac operator of the representation $\pi:\gg\rightarrow\so_{\epsilon}(V,(\phantom{v},\phantom{v}))$ of colour Lie type.

\begin{df}
Let $\lbrace e_i\rbrace$ be a basis of $V$ and let $\lbrace e^i\rbrace$ be its dual basis. Define $D \in U_{\epsilon}(\gt)\otimes C_{\epsilon}(V,(\phantom{v},\phantom{v}))$ by
$$D=\sum\limits_i e^i\otimes e_i,$$
and the cubic Dirac operator $\tilde{D}\in U_{\epsilon}(\gt)\otimes C_{\epsilon}(V,(\phantom{v},\phantom{v}))$ by
$$\tilde{D}=D+1\otimes \psi,$$
where $\psi$ is the cubic term of the colour Lie type representation $\pi:\gg\rightarrow\so_{\epsilon}(V,(\phantom{v},\phantom{v}))$.
\end{df}

Define the diagonal embedding $\Delta : \gg\rightarrow U_{\epsilon}(\gt)\otimes C_{\epsilon}(V,(\phantom{v},\phantom{v}))$ by
$$\Delta(x)=x\otimes 1 + 1\otimes \pi_*(x)\qquad \forall x\in \gg.$$
This is a colour Lie algebra morphism and we extend it to $\Delta : U_{\epsilon}(\gg)\rightarrow U_{\epsilon}(\gt)\otimes C_{\epsilon}(V,(\phantom{v},\phantom{v}))$.
\vspace{0.2cm}

We calculate the square of the cubic Dirac operator of a representation of colour Lie type analogously to the Parthasarathy formula for the square of the Dirac operator of a Cartan decomposition of a reductive Lie algebra.

\begin{thm} \label{thm square of Dirac}
Let $\pi:\gg\rightarrow\so_{\epsilon}(V,(\phantom{v},\phantom{v}))$ be a finite-dimensional $\epsilon$-orthogonal representation of colour Lie type of a finite-dimensional $\epsilon$-quadratic colour Lie algebra $(\gg,B_{\gg})$. Let $\gt$ be the associated colour Lie algebra and let $\tilde{D}$ be the cubic Dirac operator. We have
$$\tilde{D}^2=\Omega(\gt)\otimes 1-\Delta(\Omega(\gg))+\frac{1}{24}\Big(Tr_{\epsilon}(ad_{\gt}(\Omega(\gt)))-Tr_{\epsilon}(ad_{\gg}(\Omega(\gg)))\Big) 1\otimes 1,$$
where $\Omega(\gt)$ (resp. $\Omega(\gg)$) is the Casimir element of $\gt$ (resp. $\gg$).
\end{thm}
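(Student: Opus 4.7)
The plan is to expand $\tilde{D}^2 = D^2 + D(1\otimes \psi) + (1\otimes\psi)D + 1\otimes\psi^2$ in the associative algebra $U_{\epsilon}(\gt)\otimes C_{\epsilon}(V,(\phantom{v},\phantom{v}))$ and match the output to the right-hand side. Throughout I would use the $\tilde\epsilon$-signed multiplication on the tensor product, the Clifford relation $e_ie_j + \epsilon(e_i,e_j)e_je_i = 2(e_i,e_j)$, and the orthogonal decomposition $\Omega(\gt)=\sum_i x^ix_i + \sum_i e^ie_i$ in $U_{\epsilon}(\gt)$ coming from $B_{\gt} = B_{\gg} \perp (\phantom{v},\phantom{v})$.

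First I would compute $D^2 = \sum_{i,j}\epsilon(e_j,e_i)\,e^ie^j\otimes e_ie_j$ directly from the super-product. Splitting $e_ie_j = (e_i,e_j) + Q_2(e_i\wedge e_j)$ in $C_{\epsilon}(V)$, the scalar piece collapses via $\sum_j (e_j,e_i) e^j = e_i$ from \eqref{eq decomp (v,ei)ei} into $\sum_i e^ie_i\otimes 1 = (\Omega(\gt)-\Omega(\gg))\otimes 1$. Symmetrising the antisymmetric piece in $i,j$ using $e_i\wedge e_j = -\epsilon(e_i,e_j)e_j\wedge e_i$ and the $U_{\epsilon}(\gt)$-relation $e^je^i - \epsilon(e_j,e_i)e^ie^j = \{e^j,e^i\}$, what remains is $-\tfrac12\sum_{i,j}\{e^j,e^i\}\otimes Q_2(e_i\wedge e_j)$, with $\{e^j,e^i\} = \mu(e^j,e^i) + \phi(e^j,e^i)$ decomposing into $\gg$- and $V$-valued pieces.

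For the $\mu$-contribution I would write $\mu(e^j,e^i) = \sum_k (\pi(x_k)(e^j),e^i)x^k$ from the defining relation of the moment map, then use $\sum_i (\pi(x_k)(e^j),e^i)e_i = \pi(x_k)(e^j)$ from \eqref{eq decomp (v,ei)ei} and the identity $\sum_j\pi(x_k)(e^j)\wedge e_j = -2\mu_{can}^{-1}(\pi(x_k))$ from Proposition \ref{pp moment map canonique}(c). Since $\pi_* = -\tfrac12 Q_2\circ\mu_{can}^{-1}\circ\pi$, the $\mu$-part of $D^2$ identifies with $-2\sum_k x^k\otimes\pi_*(x_k)$. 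A direct expansion of $\Delta(\Omega(\gg)) = \sum_k\Delta(x^k)\Delta(x_k)$ combined with the basis-independence identity $\sum_k\epsilon(x_k)x_k\otimes x^k = \sum_k x^k\otimes x_k$ from Remark \ref{rem Id_v in V tens V}(c) then gives
\begin{equation*}
\Delta(\Omega(\gg)) = \Omega(\gg)\otimes 1 + 2\sum_k x^k\otimes\pi_*(x_k) + 1\otimes\pi_*(\Omega(\gg)),
\end{equation*}
so that this $\mu$-part accounts for the nontrivial cross term of $-\Delta(\Omega(\gg))$.

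The crux is then the cancellation
\begin{equation*}
D(1\otimes\psi) + (1\otimes\psi)D = \tfrac12\sum_{i,j}\phi(e^j,e^i)\otimes Q_2(e_i\wedge e_j).
\end{equation*}
A direct sign check gives $D(1\otimes\psi) + (1\otimes\psi)D = \sum_i e^i\otimes(e_i\psi + \psi e_i)$. Using the explicit formula $\psi = \tfrac{1}{12}\sum_{a,b}\phi(e^a,e^b)\,e_be_a$ from the proof of Theorem \ref{thm N(mu+phi)=0}, the anticommutator $e_i\psi+\psi e_i$ reduces via the Clifford relation to a quadratic-in-$V$ expression (the would-be scalar part vanishes since $|\phi(e^a,e^b)|+|e_a|+|e_b|=0$), and the $\epsilon$-alternation \eqref{phi ()-inv} of $\phi$ reorganises the surviving terms precisely into the right-hand side. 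Granting this,
\begin{equation*}
D^2 + D(1\otimes\psi) + (1\otimes\psi)D = \Omega(\gt)\otimes 1 - \Omega(\gg)\otimes 1 - 2\sum_k x^k\otimes\pi_*(x_k),
\end{equation*}
and adding $1\otimes\psi^2$ together with Corollary \ref{cor formule rho(cas)+psi carre} to substitute $\pi_*(\Omega(\gg)) + \psi^2 = \tfrac{1}{24}(Tr_{\epsilon}(ad_{\gt}(\Omega(\gt))) - Tr_{\epsilon}(ad_{\gg}(\Omega(\gg))))$ yields the claimed Parthasarathy formula. The hard part will be this Clifford-algebra cancellation: the numerical factor $1/12$ in $\psi$, the $1/2$ from antisymmetrisation in $D^2$, and the $\tilde\epsilon$-sign conventions must conspire exactly, and pinning down this combinatorial identity is the delicate core of the proof.
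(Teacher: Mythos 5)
Your proposal follows essentially the same route as the paper: expand $\tilde{D}^2$ into $D^2$ plus cross terms plus $1\otimes\psi^2$, use the Clifford relation to split $D^2$ into the scalar piece $(\Omega(\gt)-\Omega(\gg))\otimes 1$, the $\mu$-piece $-2\sum_k x^k\otimes\pi_*(x_k)$, and a $\phi$-piece; show $D(1\otimes\psi)+(1\otimes\psi)D$ cancels the $\phi$-piece; expand $\Delta(\Omega(\gg))=\Omega(\gg)\otimes 1+2\sum_k x^k\otimes\pi_*(x_k)+1\otimes\pi_*(\Omega(\gg))$; and finally invoke Corollary \ref{cor formule rho(cas)+psi carre} to identify $1\otimes(\pi_*(\Omega(\gg))+\psi^2)$ with the scalar. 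This is precisely the paper's proof, and your flagged ``delicate core'' (the Clifford cancellation of the cross term against the $\phi$-part of $D^2$) is where the paper indeed spends its computational effort; your parenthetical degree argument for the vanishing of the scalar contribution there is more a heuristic than a proof, but you correctly identify the relevant antisymmetry of $\phi$ as the mechanism, so this is a matter of writing up detail rather than a gap in the idea.
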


\begin{proof}
Let $\psi$ be the cubic term of the representation of colour Lie type $\pi : \gg \rightarrow \so_{\epsilon}(V,(\phantom{v} , \phantom{v}))$. We have
\begin{align*}
D^2&=\frac{1}{2}\sum\limits_{i,j}\Big(e^i\otimes e_i \cdot e^j\otimes e_j+e^j\otimes e_j \cdot e^i\otimes e_i \Big)\\
&=\frac{1}{2}\sum\limits_{i,j}\Big(\epsilon(e_j,e_i) e^ie^j\otimes e_ie_j+\epsilon(e_i,e_j) e^je^i\otimes e_je_i \Big)\\
&=\frac{1}{2}\sum\limits_{i,j}\Big(\epsilon(e_j,e_i) e^ie^j\otimes e_ie_j-e^je^i\otimes e_ie_j \Big)+\sum\limits_{i,j}\epsilon(e_i,e_j)e^je^i\otimes(e_j,e_i)\\
&=-\frac{1}{2}\sum\limits_{i,j}\lbrace e^j,e^i \rbrace \otimes e_ie_j+\sum\limits_{i,j}\epsilon(e_i,e_j)e^je^i\otimes(e_j,e_i)\\
&=-\frac{1}{2}\sum\limits_{i,j}\mu(e^j,e^i)\otimes e_ie_j-\frac{1}{2}\sum\limits_{i,j}\phi(e^j,e^i)\otimes e_ie_j+\sum\limits_{i,j}\epsilon(e_i,e_j)e^je^i\otimes (e_j,e_i).
\end{align*}
Firstly, using Equation \eqref{eq decomp (v,ei)ei}, we have
\begin{align*}
\sum\limits_{i,j}\epsilon(e_i,e_j)e^je^i\otimes (e_j,e_i)&=\sum\limits_{i,j} e^j(e_i,e_j)e^i\otimes 1\\
&=\Big(\Omega(\gt)-\Omega(\gg)\Big)\otimes 1,
\end{align*}
using Equations \eqref{eq decomp (v,ei)ei} and \eqref{eq pi in clifford}, we also have
\begin{align*}
-\frac{1}{2}\sum\limits_{i,j}\mu(e^j,e^i)\otimes e_ie_j&=-\frac{1}{2}\sum\limits_{i,j}\sum\limits_k B_{\gg}(x_k,\mu(e^j,e^i))x^k\otimes e_ie_j\\
&=-\frac{1}{2}\sum\limits_{i,j}\sum\limits_k (\pi(x_k)(e^j),e^i)x^k\otimes e_ie_j\\
&=-\frac{1}{2}\sum\limits_k x^k\otimes\sum\limits_j \pi(x_k)(e^j)e_j\\
&=-2\sum\limits_k x^k\otimes \pi_*(x_k)
\end{align*}
and hence
\begin{equation}\label{eq D^2 1}
D^2=-2\sum\limits_k x^k\otimes \pi_*(x_k)-\frac{1}{2}\sum\limits_{i,j}\phi(e^j,e^i)\otimes e_ie_j+\Big(\Omega(\gt)-\Omega(\gg)\Big)\otimes 1.
\end{equation}
We have
$$1\otimes \psi\cdot D=\frac{1}{12}\sum\limits_{i,j,k,l}\epsilon(e_l)e_l\otimes \psi(e^k,e^j,e^i)e_ie_je_ke^l.$$
Since
$$e_ie_je_ke^l=-\epsilon(e_l,e_i+e_j+e_k)e^le_ie_je_k+2\epsilon(e_l,e_j+e_k)\delta_{il}e_je_k-2\epsilon(e_l,e_k)\delta_{jl}e_ie_k+2\delta_{kl}e_ie_j$$
and
\begin{align*}
&\frac{1}{12}\sum\limits_{i,j,k,l}\epsilon(e_l)e_l\otimes \psi(e^k,e^j,e^i) \Big(2\epsilon(e_l,e_j+e_k)\delta_{il}e_je_k-2\epsilon(e_l,e_k)\delta_{jl}e_ie_k+2\delta_{kl}e_ie_j\Big)=\frac{1}{2}\sum_{j,k}\phi(e^k,e^j)\otimes e_je_k
\end{align*}
we obtain
\begin{equation*}
1\otimes \psi\cdot D=-\frac{1}{12}\sum\limits_{i,j,k,l}\epsilon(e_l)e_l\otimes \psi(e^k,e^j,e^i)e^le_ie_je_k+\frac{1}{2}\sum_{j,k}\phi(e^k,e^j)\otimes e_je_k.
\end{equation*}
Since
$$D\cdot 1\otimes \psi=\frac{1}{12}\sum\limits_{i,j,k,l}\epsilon(e_l)e_l\otimes \psi(e^k,e^j,e^i)e^le_ie_je_k$$
we have
\begin{align*}
D\cdot 1\otimes \psi + 1\otimes \psi\cdot D&=\sum\limits_le^l\otimes e_l\cdot 1\otimes \frac{1}{12}\sum\limits_{i,j,k}\psi(e^k,e^j,e^i)e_ie_je_k+1\otimes \frac{1}{12}\sum\limits_{i,j,k}\psi(e^k,e^j,e^i)e_ie_je_k\cdot\sum\limits_le^l\otimes e_l\\
&=\frac{1}{2}\sum\limits_{i,j}\phi(e^j,e^i)\otimes e_ie_j
\end{align*}
and so from Equation \eqref{eq D^2 1} we obtain
\begin{equation*}
D^2=-2\sum\limits_k x^k\otimes \pi_*(x_k)+\Big(\Omega(\gt)-\Omega(\gg)\Big)\otimes 1+ 1\otimes \psi^2.
\end{equation*}
We have
\begin{align*}
\Delta(\Omega(\gg))&=\sum\limits_k\Delta(x^k)\Delta(x_k)\\
&=\sum\limits_k \Big(x^k\otimes 1 + 1\otimes \pi_*(x^k)\Big)\Big(x_k\otimes 1 + 1\otimes \pi_*(x_k)\Big)\\
&=\sum\limits_kx^kx_k\otimes1+\sum\limits_kx^k\otimes \pi_*(x_k)+\sum\limits_k\epsilon(x_k)x_k\otimes \pi_*(x^k)+\sum_k1\otimes\pi_*(x^k)\pi_*(x_k)\\
&=\Omega(\gg)\otimes 1+2\sum_k x^k\otimes\pi_*(x_k)+1\otimes\pi_*(\Omega(\gg)),
\end{align*}
hence
$$(D+1\otimes\psi)^2=\Omega(\gt)\otimes 1-\Delta(\Omega(\gg))+\Big(1\otimes\pi_*(\Omega(\gg))+1\otimes \psi^2\Big)$$
and from Corollary \ref{cor formule rho(cas)+psi carre} we obtain
$$\tilde{D}^2=\Omega(\gt)\otimes 1-\Delta(\Omega(\gg))+\frac{1}{24}\Big(Tr_{\epsilon}(ad_{\gt}(\Omega(\gt)))-Tr_{\epsilon}(ad_{\gg}(\Omega(\gg)))\Big) 1\otimes 1.$$
\end{proof}

\begin{cor}
Suppose that $char(k)=0$, the colour Lie algebras $\gt$ and $\gg$ are basic in the sense of Subsection \ref{subsection split CLA} and $B_{\gt}$ (resp. $B_{\gg}$) is a multiple of the $\epsilon$-Killing form of $\gt$ (resp. $\gg$). We have
$$\tilde{D}^2=\Omega(\gt)\otimes 1-\Delta(\Omega(\gg))+\Big(B_{\gt^*}(\rho_{\gt},\rho_{\gt})-B_{\gg^*}(\rho_{\gg},\rho_{\gg})\Big) 1\otimes 1,$$
where $\rho_{\gt}$ (resp. $\rho_{\gg}$) is the Weyl vector of $\gt$ (resp. $\gg$) with respect to a Cartan subalgebra $\hh_{\gt}$ (resp. $\hh_{\gg}$) of $\gt$ (resp. $\gg$) and a choice of a full set of positive roots for the action of $ad(\hh_{\gt})$ (resp. $ad(\hh_{\gg})$).
\end{cor}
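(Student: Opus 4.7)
The plan is to combine the Parthasarathy formula in Theorem \ref{thm square of Dirac} with the strange Freudenthal-de Vries formula in Theorem \ref{thm FdV formula}. Since Theorem \ref{thm square of Dirac} already gives
$$\tilde{D}^2=\Omega(\gt)\otimes 1-\Delta(\Omega(\gg))+\frac{1}{24}\Big(Tr_{\epsilon}(ad_{\gt}(\Omega(\gt)))-Tr_{\epsilon}(ad_{\gg}(\Omega(\gg)))\Big) 1\otimes 1,$$
it suffices to establish the identity
$$\frac{1}{24}Tr_{\epsilon}(ad_{\gg}(\Omega(\gg)))=B_{\gg^*}(\rho_{\gg},\rho_{\gg}),$$
together with the analogous statement for $\gt$. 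Subtracting the two versions gives the desired scalar.

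To prove this identity, I would first rewrite the trace in terms of the $\epsilon$-Killing form $K_{\gg}$. If $\lbrace x_i\rbrace$ is a basis of $\gg$ with dual basis $\lbrace x^i\rbrace$ with respect to $B_{\gg}$, then expanding $\Omega(\gg)=\sum_i x^ix_i$ and using the definition $K_{\gg}(x,y)=Tr_{\epsilon}(ad(x)ad(y))$ gives
$$Tr_{\epsilon}(ad_{\gg}(\Omega(\gg)))=\sum_i K_{\gg}(x^i,x_i).$$
Writing $B_{\gg}=cK_{\gg}$ with $c\in k^*$, one has $K_{\gg}(x^i,x_i)=\frac{1}{c}B_{\gg}(x^i,x_i)=\frac{1}{c}\epsilon(x_i)$ (the latter equality coming from $\epsilon$-symmetry of $B_{\gg}$ and the definition of the dual basis), so the sum collapses to $\frac{1}{c}dim_{\epsilon}(\gg)$.

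Next I would apply Theorem \ref{thm FdV formula} to $\gg$ with its $\epsilon$-Killing form as the $\epsilon$-quadratic structure, which yields $dim_{\epsilon}(\gg)=24B^{K_{\gg}}_{\gg^*}(\rho_{\gg},\rho_{\gg})$, where the superscript indicates the bilinear form on $\hh^*$ induced by $K_{\gg}$. A short check shows that if $B_{\gg}=cK_{\gg}$, then for $\alpha\in\hh^*$ the representing vector satisfies $h_{\alpha}^{B_{\gg}}=\frac{1}{c}h_{\alpha}^{K_{\gg}}$, and hence $B^{B_{\gg}}_{\gg^*}=\frac{1}{c}B^{K_{\gg}}_{\gg^*}$. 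Substituting gives
$$\frac{1}{24}Tr_{\epsilon}(ad_{\gg}(\Omega(\gg)))=\frac{1}{24c}\cdot 24B^{K_{\gg}}_{\gg^*}(\rho_{\gg},\rho_{\gg})=\frac{1}{c}B^{K_{\gg}}_{\gg^*}(\rho_{\gg},\rho_{\gg})=B_{\gg^*}(\rho_{\gg},\rho_{\gg}).$$
The identical argument applied to $\gt$ (which by hypothesis is basic and whose invariant form is a multiple of its own $\epsilon$-Killing form) yields the analogous equality for $\rho_{\gt}$.

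The main obstacle, and the only subtle point, is keeping track of the two compensating scalings by $c$: the trace of the Casimir scales by $\frac{1}{c}$ under $B_{\gg}\mapsto cB_{\gg}$, while the induced form $B_{\gg^*}$ on $\hh^*$ scales by $\frac{1}{c}$ as well (note $\rho_{\gg}$ itself is defined intrinsically from the root system and does not depend on the choice of $c$). These two factors must cancel exactly, which is what makes the final statement independent of the specific multiple of the Killing form chosen. Once this bookkeeping is in place for both $\gg$ and $\gt$, substitution back into Theorem \ref{thm square of Dirac} gives the corollary.
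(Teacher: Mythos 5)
Your proposal is correct and follows essentially the same route as the paper: both reduce the scalar in Theorem \ref{thm square of Dirac} to $dim_{\epsilon}(\gg)$ (and $dim_{\epsilon}(\gt)$) via the relation $B=cK$ and then invoke Theorem \ref{thm FdV formula}, keeping track of how the factor $c$ propagates to the induced form on $\hh^*$. The only cosmetic difference is that the paper passes through the Casimir $\Omega_K(\gg)=c\,\Omega(\gg)$ and observes $Tr_{\epsilon}(ad_{\gg}(\Omega_K(\gg)))=dim_{\epsilon}(\gg)$ directly, whereas you expand $Tr_{\epsilon}(ad_{\gg}(\Omega(\gg)))=\sum_i K_{\gg}(x^i,x_i)$ in a basis and evaluate termwise; these are the same computation.
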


\begin{proof}
Let $c\in k^*$ be such that $B_{\gg}=cK_{\gg}$, where $K_{\gg}$ is the $\epsilon$-Killing form of $\gg$. Hence, if $\Omega_K(\gg)\in U_{\epsilon}(\gg)$ is the Casimir element corresponding to $K_{\gg}$, we have 
$$\Omega(\gg)=\frac{1}{c}\Omega_K(\gg).$$
Thus,
$$Tr_{\epsilon}(ad_{\gg}(\Omega(\gg)))=\frac{1}{c}Tr_{\epsilon}(ad_{\gg}(\Omega_K(\gg)))=\frac{1}{c}dim_{\epsilon}(\gg)$$
and by the strange Freudenthal-de Vries formula (Theorem \ref{thm FdV formula}) we obtain
$$Tr_{\epsilon}(ad_{\gg}(\Omega(\gg)))=\frac{24}{c}K_{\gg^*}(\rho_{\gg},\rho_{\gg}).$$
Since $B_{\gg^*}=\frac{1}{c}K_{\gg^*}$, we have
$$Tr_{\epsilon}(ad_{\gg}(\Omega(\gg)))=24B_{\gg^*}(\rho_{\gg},\rho_{\gg}).$$
\end{proof}

\section{Differential complex induced by the cubic Dirac operator}\label{section cohomology}

In this section we show that the cubic Dirac operator of a representation of colour Lie type induces a differential complex and we calculate the associated cohomology, which is the analogue of the algebraic Vogan conjecture proved by Huang and Pand\v{z}i\'c (\cite{HuangPandzic02}) for colour Lie algebras.

\begin{df}
Let $\pi:\gg\rightarrow\so_{\epsilon}(V,(\phantom{v},\phantom{v}))$ be a finite-dimensional $\epsilon$-orthogonal representation of colour Lie type of a finite-dimensional $\epsilon$-quadratic colour Lie algebra $(\gg,B_{\gg})$. Let $\gt$ be the associated colour Lie algebra and let $\tilde{D}$ be the cubic Dirac operator. Define the induced differential $d \in {\rm End}\Big(U_{\epsilon}(\gt)\otimes C_{\epsilon}(V,(\phantom{v},\phantom{v}))\Big)$ by
$$d(a):=\lbrace \tilde{D},a\rbrace \qquad \forall a \in U_{\epsilon}(\gt)\otimes C_{\epsilon}(V,(\phantom{v},\phantom{v})).$$
\end{df}

Consider the $\gg$-invariant subspace of $U_{\epsilon}(\gt)\otimes C_{\epsilon}(V,(\phantom{v},\phantom{v}))$ defined by
$$\Big(U_{\epsilon}(\gt)\otimes C_{\epsilon}(V,(\phantom{v},\phantom{v}))\Big)^{\gg}=\lbrace  a\in U_{\epsilon}(\gt)\otimes C_{\epsilon}(V,(\phantom{v},\phantom{v})) ~ | ~ \lbrace \Delta(x), a \rbrace=0 \quad \forall x \in \gg \rbrace,$$
and define $\mathcal{Z}_{\epsilon}(\Delta(\gg))\subset U_{\epsilon}(\gt)\otimes C_{\epsilon}(V,(\phantom{v},\phantom{v}))$ by $\mathcal{Z}_{\epsilon}(\Delta(\gg)):=\Delta(\mathcal{Z}_{\epsilon}(\gg))$.

\begin{pp}\label{pp d^2=0} In $\Big(U_{\epsilon}(\gt)\otimes C_{\epsilon}(V,(\phantom{v},\phantom{v}))\Big)^{\gg}$, we have $d^2=0$ and we have $\mathcal{Z}_{\epsilon}(\Delta(\gg))\subset Ker(d)$.
\end{pp}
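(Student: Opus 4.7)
The plan is to reduce both assertions to the Parthasarathy-type formula of Theorem~\ref{thm square of Dirac} together with a single key claim: that $\tilde D$ is $\Delta(\gg)$-invariant, i.e.\ $\{\Delta(x),\tilde D\}=0$ for every $x\in\gg$.

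For the equality $d^{2}=0$ on the invariants, observe that $\tilde D$ has $(\mathbb{Z}_{2}\times\Gamma)$-degree $(1,0)$, so $\tilde\epsilon(\tilde D,\tilde D)=-1$ and hence $\{\tilde D,\tilde D\}=2\tilde D^{2}$; the $\tilde\epsilon$-Jacobi identity then yields $\{\tilde D,\{\tilde D,a\}\}=\{\tilde D^{2},a\}$ for every $a$. By Theorem~\ref{thm square of Dirac},
$$\tilde D^{2}=\Omega(\gt)\otimes 1-\Delta(\Omega(\gg))+c\cdot 1\otimes 1,\qquad c\in k.$$
The scalar summand is central, $\Omega(\gt)\otimes 1$ is central because $\Omega(\gt)\in\mathcal Z_{\epsilon}(\gt)$, and $\Delta(\Omega(\gg))$ $\tilde\epsilon$-commutes with every $\gg$-invariant $a$ since it is a polynomial in $\Delta(\gg)$ and one may iterate the associative Leibniz identity $\{yz,a\}=y\{z,a\}+\tilde\epsilon(z,a)\{y,a\}z$. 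Therefore $d^{2}(a)=\{\tilde D^{2},a\}=0$.

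For the inclusion $\mathcal Z_{\epsilon}(\Delta(\gg))\subset\mathrm{Ker}(d)$, let $z\in\mathcal Z_{\epsilon}(\gg)$. Since $\Delta\colon U_{\epsilon}(\gg)\to U_{\epsilon}(\gt)\otimes C_{\epsilon}(V,(\phantom{v},\phantom{v}))$ is an algebra morphism, $\Delta(z)$ lies in the $\gg$-invariant subspace, so $d(\Delta(z))$ is defined. Granted the key claim that $\{\Delta(x),\tilde D\}=0$ for every $x\in\gg$, the same Leibniz iteration extends this to $\{\Delta(z),\tilde D\}=0$, and $\tilde\epsilon$-antisymmetry converts this into $d(\Delta(z))=0$.

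The main obstacle is the key claim itself, which I would prove by splitting $\tilde D=D+1\otimes\psi$ and treating each piece. For $D=\sum_{i}e^{i}\otimes e_{i}$, expanding $\{\Delta(x),D\}$ produces two contributions: one from $x\otimes 1$ giving $\sum_{i}\pi(x)(e^{i})\otimes e_{i}$ (using $\{x,e^{i}\}_{\gt}=\pi(x)e^{i}$), and one from $1\otimes\pi_{*}(x)$ giving $\sum_{i}e^{i}\otimes\pi(x)(e_{i})$, because $\pi_{*}=-\tfrac12 Q_{2}\circ\mu_{can}^{-1}\circ\pi$ is built exactly so that the inner derivation by $\pi_{*}(x)$ in $C_{\epsilon}(V,(\phantom{v},\phantom{v}))$ restricts to $\pi(x)$ on $V$. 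These two contributions cancel because, under the canonical isomorphism $V\otimes V\simeq\mathrm{End}(V)$ of Remark~\ref{rem Id_v in V tens V}(c), the element $D$ corresponds to $\mathrm{Id}_{V}\in\mathrm{End}(V)$, which is $\gg$-invariant; concretely, one rewrites $\sum_{i}\pi(x)(e^{i})\otimes e_{i}$ using the dual-basis identity of Remark~\ref{rem Id_v in V tens V}(d) and invokes the $\epsilon$-skewness $(\pi(x)v,w)+\epsilon(x,v)(v,\pi(x)w)=0$. For the cubic term, $\{x\otimes 1,1\otimes\psi\}=0$ by a direct sign check across orthogonal tensor slots, while $\{1\otimes\pi_{*}(x),1\otimes\psi\}=1\otimes\{\pi_{*}(x),\psi\}_{C_{\epsilon}}$ vanishes because, via Proposition~\ref{pp Lambda dual isom to Lambda}, $\psi$ corresponds to the trilinear form $(u,v,w)\mapsto\tfrac12(u,\phi(v,w))$, which is $\gg$-invariant exactly by hypothesis~\eqref{phi g-inv} on $\phi$; this invariance transports through $Q_{3}$ into the required vanishing in $C_{\epsilon}(V,(\phantom{v},\phantom{v}))$. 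Assembling the two pieces yields $\{\Delta(x),\tilde D\}=0$, completing the proof.
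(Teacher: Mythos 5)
Your proof is correct and follows essentially the same route as the paper: the square $d^{2}(a)=\tilde D^{2}a-a\tilde D^{2}$ (you obtain it via the $\tilde\epsilon$-Jacobi identity, the paper by direct expansion of $d(\tilde D a-\tilde\epsilon(\tilde D,a)a\tilde D)$, but these are equivalent since $\tilde\epsilon(\tilde D,a)^{2}=1$), combined with Theorem~\ref{thm square of Dirac} and centrality of $\Omega(\gt)$ plus $\gg$-invariance of $a$. For the second assertion the paper merely remarks that it is a ``straightforward calculation since $D$ corresponds to $Id_{V}$ and $\psi$ is $\gg$-invariant''; your ``key claim'' $\{\Delta(x),\tilde D\}=0$ is precisely what this phrase encodes, and your verification of it (the $x\otimes 1$ and $1\otimes\pi_{*}(x)$ contributions to $\{\Delta(x),D\}$ cancel because $D\leftrightarrow Id_{V}$ is $\gg$-equivariant, and $\{\Delta(x),1\otimes\psi\}=0$ because $\psi$ quantises the $\gg$-invariant trilinear form built from $\phi$) is exactly the omitted computation.
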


\begin{proof}
We first prove that $d^2=0$. Let $a\in U_{\epsilon}(\gt)\otimes C_{\epsilon}(V,(\phantom{v},\phantom{v}))$. We have
\begin{align*}
d^2(a)&=d(\tilde{D}a-\tilde{\epsilon}(\tilde{D},a)a\tilde{D})\\
&=\tilde{D}^2a-\tilde{\epsilon}(\tilde{D},\tilde{D}a)\tilde{D}a\tilde{D}-\tilde{\epsilon}(\tilde{D},a)\tilde{D}a\tilde{D}+\tilde{\epsilon}(\tilde{D},a+a\tilde{D})a\tilde{D}^2
\end{align*}
and since $\tilde{\epsilon}(\tilde{D},a)=(-1)^{|a|}$ and $\tilde{\epsilon}(\tilde{D},\tilde{D}a)=-(-1)^{|a|}$ we obtain 
$$d^2(a)=\tilde{D}^2a-a\tilde{D}^2.$$
From Theorem \ref{thm square of Dirac}, we have
$\tilde{D}^2=\Omega(\gt)\otimes 1-\Delta(\Omega(\gg))+c 1\otimes 1$ where $c\in k$ and hence
$$d^2(a)=\Omega(\gt)\otimes 1\cdot a-a\cdot \Omega(\gt)\otimes1-\Delta(\Omega(\gg))\cdot a+a\cdot \Delta(\Omega(\gg)).$$
Since $\Omega(\gt)\in\mathcal{Z}_{\epsilon}(\gt)$, we obtain
$$d^2(a)=-\Delta(\Omega(\gg))\cdot a+a\cdot \Delta(\Omega(\gg)).$$
And hence, if $a\in \Big(U_{\epsilon}(\gt)\otimes C_{\epsilon}(V,(\phantom{v},\phantom{v}))\Big)^{\gg}$, then we obtain that $d^2(a)=0.$
\vspace{0.2cm}

The assertion $\mathcal{Z}_{\epsilon}(\Delta(\gg))\subset Ker(d)$ follows from a straightforward calculation since $D$ corresponds to $Id_V$ and since the cubic term $\psi$ is invariant by the action of $\gg$.
\end{proof}

We now prove the analogue of the Vogan conjecture for colour Lie algebras.

\begin{thm} \label{thm kernel of d}
Suppose $char(k)=0$. Let $\pi:\gg\rightarrow\so_{\epsilon}(V,(\phantom{v},\phantom{v}))$ be a finite-dimensional $\epsilon$-orthogonal representation of colour Lie type of a finite-dimensional $\epsilon$-quadratic colour Lie algebra $(\gg,B_{\gg})$. Let $\gt$ be the associated colour Lie algebra, let $\tilde{D}$ be the cubic Dirac operator and let $d$ be its induced differential on $(U_{\epsilon}(\gt)\otimes C_{\epsilon}(V,(\phantom{v},\phantom{v})))^{\gg}$. Then we have $$Ker(d)=\mathcal{Z}_{\epsilon}(\Delta(\gg))\oplus Im(d).$$
\end{thm}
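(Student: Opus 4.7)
The plan is to adapt Huang-Pand\v{z}i\'c's argument for the algebraic Vogan conjecture (\cite{HuangPandzic02}) to the $\epsilon$-graded setting, using Lemma \ref{lm koszul complex} (the $\epsilon$-Koszul/Poincar\'e lemma) in place of its classical analogue. Proposition \ref{pp d^2=0} already gives $\mathcal{Z}_{\epsilon}(\Delta(\gg))+Im(d)\subseteq Ker(d)$, so the work is to prove the reverse inclusion and the directness of the sum.

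The main step is to introduce the PBW filtration on $A:=U_{\epsilon}(\gt)\otimes C_{\epsilon}(V,(\phantom{v},\phantom{v}))$ (with $\gt=\gg\oplus V$ in filtration degree one on the first tensor factor and $V$ in filtration degree one on the second). By the PBW theorem for $U_{\epsilon}$ and the fact that the associated graded of $C_{\epsilon}(V,(\phantom{v},\phantom{v}))$ is $\Lambda_{\epsilon}(V)$, there is a canonical isomorphism of $\Gamma$-graded $\gg$-modules $gr\, A\cong S_{\epsilon}(\gt)\otimes \Lambda_{\epsilon}(V)$. The principal symbol of $\tilde{D}=\sum_i e^i\otimes e_i+1\otimes\psi$ is the element $\sum_i e^i\otimes e_i\in V\otimes V\subseteq S_{\epsilon}(\gt)\otimes\Lambda_{\epsilon}(V)$, since the cubic term $1\otimes\psi$ has strictly lower filtration degree. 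A direct computation with the brackets then shows that the induced differential on $gr\, A$ is precisely the $\epsilon$-Koszul differential $\partial$ attached to the embedding $V\hookrightarrow\gt$.

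Restricting to $\gg$-invariants, which is exact on each finite-dimensional filtration piece by complete reducibility in characteristic zero, Lemma \ref{lm koszul complex} computes the cohomology of $\partial$ on $(S_{\epsilon}(\gt)\otimes\Lambda_{\epsilon}(V))^{\gg}$: it is concentrated in $\Lambda^0_{\epsilon}(V)$-degree and equals $S_{\epsilon}(\gg)^{\gg}\otimes 1$ (identifying $\gt/V\cong\gg$). A standard spectral sequence argument — or equivalently, an induction on filtration degree that lifts each symbol to an element of $\Delta(U_{\epsilon}(\gg))$ and subtracts off a coboundary to kill the next lower-order term — now produces, for every $a\in Ker(d)$, an element of $\Delta(\mathcal{Z}_{\epsilon}(\gg))$ congruent to $a$ modulo $Im(d)$; $\gg$-invariance of the lift is automatic because the lift lands in $\Delta(U_{\epsilon}(\gg))$ and the result is $\gg$-invariant. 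For directness, if $\Delta(z)=d(a)$ with $z\in\mathcal{Z}_{\epsilon}(\gg)$, one passes to the associated graded and projects to the $\Lambda^0_{\epsilon}(V)$-component: the right-hand side vanishes since $\partial$ raises $\Lambda_{\epsilon}$-degree, while the left-hand side is sent to the non-zero PBW symbol of $z$ in $S_{\epsilon}(\gg)^{\gg}$ unless $z=0$.

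The main obstacle is the bookkeeping of the $\tilde\epsilon$-signs when taking the symbol of $d=\{\tilde{D},\cdot\}$ on $\gg$-invariants and verifying that this really reduces to the $\epsilon$-Koszul differential of Lemma \ref{lm koszul complex} (rather than some twisted variant); in particular one must check that the contribution of $1\otimes\psi$ drops a filtration degree, which relies on $\psi$ coming from $Q_3$ applied to a $\Lambda^3_{\epsilon}(V)$-class. Once this identification is in place, the remaining arguments — spectral sequence degeneration and the inductive lift preserving $\gg$-invariance — are formal consequences of semi-simplicity and the PBW isomorphism in the colour setting.
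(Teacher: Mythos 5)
Your overall strategy does match the paper's: filter, pass to the associated graded, identify the symbol of $d$ with the $\epsilon$-Koszul differential of Lemma \ref{lm koszul complex}, then run an induction on filtration degree lifting graded representatives to $\Delta(\mathcal{Z}_{\epsilon}(\gg))$ modulo $Im(d)$. However, the filtration you describe does not do what you claim. If you put $\gt$ in degree one on the $U_{\epsilon}(\gt)$ factor \emph{and} $V$ in degree one on the $C_{\epsilon}(V,(\phantom{v},\phantom{v}))$ factor and take the total degree, then $D=\sum_i e^i\otimes e_i$ lies in total degree $1+1=2$ while $1\otimes\psi$ lies in total degree $0+3=3$, since $\psi=Q_3$ of a $\Lambda^3_{\epsilon}(V)$-class. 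So in the bifiltration the cubic term is \emph{strictly higher}, not lower, and the leading symbol would be $1\otimes\psi$ rather than $D$. The paper avoids this entirely by filtering \emph{only} $U_{\epsilon}(\gt)$: then $D$ has degree $1$, $1\otimes\psi$ has degree $0$, $gr(A)\cong S_{\epsilon}(\gt)\otimes C_{\epsilon}(V,(\phantom{v},\phantom{v}))$, and one uses the quantisation map $Q$ as a $\Gamma$-graded vector space isomorphism $\Lambda_{\epsilon}(V)\cong C_{\epsilon}(V,(\phantom{v},\phantom{v}))$ to transport $gr(d)$ to (twice) the Koszul differential $d'$ of Lemma \ref{lm koszul complex}.

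A second issue is the appeal to ``complete reducibility in characteristic zero'' to pass to $\gg$-invariants. The hypotheses do not make $\gg$ reductive or semisimple; $\gg$ is only a finite-dimensional $\epsilon$-quadratic colour Lie algebra, and its finite-dimensional representations need not be semisimple. What actually lets you restrict Lemma \ref{lm koszul complex} to $(S_{\epsilon}(\gg)\otimes S_{\epsilon}(V)\otimes\Lambda_{\epsilon}(V))^{\gg}$ is that $d'$ acts only on the $S_{\epsilon}(V)\otimes\Lambda_{\epsilon}(V)$ factor, leaving $S_{\epsilon}(\gg)$ untouched, together with the fact that the Koszul complex has a $\gg$-equivariant (indeed $GL_{\epsilon}(V)$-equivariant) contracting homotopy, so that taking kernels, images and $\gg$-invariants all commute. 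With those two corrections your induction — using the $\gg$-equivariant symmetrisation $s:S_{\epsilon}(\gg)\to U_{\epsilon}(\gg)$ to lift graded symbols, as in the paper — goes through; and your separate argument for the directness of the sum, by projecting the associated graded onto the $\Lambda^0_{\epsilon}(V)$-component, is a welcome addition that the paper leaves implicit.
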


\begin{proof}
We first need the following lemma.

\begin{lm}\label{lm koszul complex}
Suppose that $\epsilon(v)=1$ for all $v\in V$ or suppose $char(k)=0$. Let $V$ be a finite-dimensional $\Gamma$-graded vector space. Let $X_n(V):=S_{\epsilon}(V)\otimes \Lambda^n_{\epsilon}(V)$ and $d_{n+1} : X_{n+1}(V)\rightarrow X_n(V)$ be defined by
$$d_{n+1}(x\otimes y_0\wedge \cdots \wedge y_n)=\sum \limits_{i=0}^n (-1)^{i}\epsilon(y_0+\cdots+y_{i-1},y_i) xy_i\otimes y_0\wedge\cdots \wedge \hat{y_i} \wedge \cdots \wedge y_n.$$
Then
\begin{equation} \label{complex chain S otimes Lambda}
\ldots \xrightarrow{d_{n+1}} X_n(V) \xrightarrow{d_n} \ldots \xrightarrow{d_1} X_0(V) \xrightarrow{d_0} k \rightarrow \lbrace 0\rbrace,
\end{equation}
where $d_0 : S_{\epsilon}(V)\otimes k \rightarrow k$ is the augmentation map, is an exact chain complex. That is to say that $(X_{\bullet}(V),d)$ is a standard resolution of the trivial $S_{\epsilon}(V)$-module $k$.
\end{lm}
\begin{proof}
One can check that $d_n\circ d_{n+1}=0$ and that \eqref{complex chain S otimes Lambda} is exact in $X_{-1}(V)=k$. We now show by induction on $dim(V)$ that this chain complex is exact.\\

\noindent
$\bullet$ Initialisation: Suppose that $dim(V)=1$ and let $v$ be a basis of $V$. If $\epsilon(v)=1$, we have $\Lambda^n_{\epsilon}(V)=\lbrace 0 \rbrace$ for all $n\geq 2$ and then the chain complex $(X_{\bullet}(V),d)$ is of the form
\begin{equation} \label{complex chain dim(v)=1}
\lbrace 0 \rbrace \rightarrow S(V)\otimes V \xrightarrow{d_1} S(V)\otimes k \xrightarrow{d_0} k \rightarrow \lbrace 0\rbrace.
\end{equation}
The map $d_1$ is clearly injective. We have $Ker(d_0)=\sum\limits_{i\geq 1}S^i_{\epsilon}(V)\otimes k$ and since for $x\otimes v \in S_{\epsilon}(V)\otimes V$ we have $d_1(x\otimes v)=xv\otimes 1$ we obtain that $Ker(d_0)=Im(d_1)$. Hence \eqref{complex chain dim(v)=1} is a short exact sequence.
\vspace{0.1cm}

If $\epsilon(v)=-1$, every $y$ in $X_n(V)=\Lambda(V)\otimes S^n(V)$ is of the form $y=(\sum\limits_i \alpha_i+\beta_i v)\otimes v^n$ and then, since $char(k)=0$, $d_n(y)=\sum\limits_i n\alpha_iv\otimes v^{n-1}$ is equal to $0$ if and only if $y=\sum\limits_i \beta_i v\otimes v^n$ that is to say if and only if $y\in Im(d_{n+1})$ and so the chain complex \eqref{complex chain S otimes Lambda} is exact.
\vspace{0.3cm}

\noindent
$\bullet$ Induction: Let $n\in \mathbb{N}$ and suppose that the chain complex \eqref{complex chain S otimes Lambda} is exact for every $\Gamma$-graded vector space $V$ of dimension $n$. Let $V$ be a $\Gamma$-graded vector space of dimension $n+1$. Let $v$ be an homogeneous element of $V$ and let $W\subset V$ be a $\Gamma$-graded vector space such that $V=W\oplus kv$.
\vspace{0.2cm}

Since $\Lambda_{\epsilon}(V)\cong\Lambda_{\epsilon}(W) \otimes \Lambda_{\epsilon}(kv)$ and since the family $\lbrace v^{\wedge i}, ~ \forall i \in \mathbb{N} \rbrace$ generates $\Lambda_{\epsilon}(kv)$, we define a filtration of $X_n(V)$ by
$$F_p(X_n(V)):=S_{\epsilon}(V)\otimes \overset{p}{\underset{i=0}{\oplus}}\Lambda_{\epsilon}^{n-i}(W)\wedge v^{\wedge i}.$$
The multiplication on the right by $v^{\wedge p}$ defines an isomorphism between the chain complex $(gr(X)_{\bullet},gr(d))$ and the chain complex $(S_{\epsilon}(V)\underset{S_{\epsilon}(W)}{\otimes} X_{\bullet-p}(W),1\otimes d)$. The chain complex $(X_{\bullet}(W),d)$ is exact by assumption. Moreover, $S_{\epsilon}(V)$ is a free $S_{\epsilon}(W)$-module, so the chain complex $(S_{\epsilon}(V)\underset{S_{\epsilon}(W)}{\otimes} X_{\bullet}(W),1\otimes d)$ is exact and hence $(X_{\bullet}(V),d)$ is exact.
\end{proof}
\vspace{0.4cm}

Denote by $\alpha(x)$ the $\mathbb{Z}_2$-degree of $x\in U_{\epsilon}(\gt)\otimes C_{\epsilon}(V,(\phantom{v},\phantom{v}))$. The standard filtration of $U_{\epsilon}(\gt)$ induces a filtration of $U_{\epsilon}(\gt)\otimes C_{\epsilon}(V,(\phantom{v},\phantom{v}))$ and since
$$\gg\cdot F_n(U_{\epsilon}(\gt)\otimes C_{\epsilon}(V,(\phantom{v},\phantom{v})))\subseteq F_n(U_{\epsilon}(\gt)\otimes C_{\epsilon}(V,(\phantom{v},\phantom{v}))),$$
then we have a filtration of $(U_{\epsilon}(\gt)\otimes C_{\epsilon}(V,(\phantom{v},\phantom{v})))^{\gg}$ by 
$$F_n((U_{\epsilon}(\gt)\otimes C_{\epsilon}(V,(\phantom{v},\phantom{v})))^{\gg}):=F_n(U_{\epsilon}(\gt)\otimes C_{\epsilon}(V,(\phantom{v},\phantom{v})))^{\gg}.$$
Since $gr(U_{\epsilon}(\gt))\cong S_{\epsilon}(\gt)$ (see \cite{Scheunert79}) and $S_{\epsilon}(\gg\oplus V)\cong S_{\epsilon}(\gg)\otimes S_{\epsilon}(V)$ the map
$$d : U_{\epsilon}(\gt)\otimes C_{\epsilon}(V,(\phantom{v},\phantom{v}))\rightarrow U_{\epsilon}(\gt)\otimes C_{\epsilon}(V,(\phantom{v},\phantom{v}))$$ induces
$$gr(d) : S_{\epsilon}(\gg)\otimes S_{\epsilon}(V)\otimes C_{\epsilon}(V,(\phantom{v},\phantom{v}))\rightarrow S_{\epsilon}(\gg)\otimes S_{\epsilon}(V)\otimes C_{\epsilon}(V,(\phantom{v},\phantom{v})).$$
For $a\otimes x\otimes y \in S_{\epsilon}(\gg)\otimes S_{\epsilon}(V)\otimes C_{\epsilon}(V,(\phantom{v},\phantom{v}))$ we have
\begin{align*}
gr(d)(a\otimes x \otimes y)&=a\otimes\sum\limits_i  x e^i\otimes \Big(e_iy-(-1)^{\alpha(y)}\epsilon(e_i,y)ye_i\Big),
\end{align*}
where $\lbrace e_i\rbrace$ is a basis of $V$ and $\lbrace e^i\rbrace$ its dual basis. The quantisation map $Q:\Lambda_{\epsilon}(V)\rightarrow C_{\epsilon}(V,(\phantom{v},\phantom{v}))$ is a canonical isomorphism of $\Gamma$-graded vector spaces. If we consider $d':S_{\epsilon}(\gg)\otimes S_{\epsilon}(V)\otimes \Lambda_{\epsilon}(V)\rightarrow S_{\epsilon}(\gg)\otimes S_{\epsilon}(V)\otimes \Lambda_{\epsilon}(V)$ given by 
$$d'(a\otimes x\otimes y_0\wedge \cdots \wedge y_n)=2a\otimes\sum \limits_{i=0}^n (-1)^{i}\epsilon(y_0+\cdots+y_{i-1},y_i) xy_i\otimes y_0\wedge\cdots \wedge \hat{y_i} \wedge \cdots \wedge y_n,$$
using
$$Q(y_0\wedge \ldots \wedge y_n)=\frac{1}{n+1}\sum\limits_{\sigma \in S(\llbracket 1 ,n\rrbracket,\lbrace n+1 \rbrace)}p(\sigma,y_0+\ldots+y_n)Q(y_{\sigma(0)}\wedge\ldots\wedge y_{\sigma(n-1)})y_{\sigma(n)},$$
where $S(\llbracket 1 ,n\rrbracket,\lbrace n+1 \rbrace)$ denotes the shuffle permutations of $\llbracket 1 ,n\rrbracket$ and $\lbrace n+1 \rbrace$ in $\llbracket 1 ,n+1\rrbracket$, then one can show by induction on $n$ that $gr(d)\circ Q=Q\circ d'$.\\

Hence, by Lemma \ref{lm koszul complex}, the kernel $Ker(d')\subseteq S_{\epsilon}(V)\otimes \Lambda_{\epsilon}(V)$ satisfies
$$Ker(d')=Im(d)\oplus k\cdot 1\otimes 1,$$
and so the kernel $Ker(d')\subseteq (S_{\epsilon}(\gg)\otimes S_{\epsilon}(V)\otimes \Lambda_{\epsilon}(V))^{\gg}$ satisfies
\begin{equation}\label{proof thm kernel of d 1}
Ker(d')=Im(d)\oplus  (S_{\epsilon}(\gg))^{\gg}\otimes 1\otimes 1.
\end{equation}
Now we go back to the study of the kernel of $d$. We have
$$\mathcal{Z}_{\epsilon}(\Delta(\gg))\oplus Im(d)\subseteq Ker(d),$$
from Proposition \ref{pp d^2=0} and we show the other inclusion by induction on the degree of the filtration of $(U_{\epsilon}(\gt)\otimes C_{\epsilon}(V,(\phantom{v},\phantom{v})))^{\gg}$.\\

\noindent
$\bullet$ Initialisation: Since $F_{-1}((U_{\epsilon}(\gt)\otimes C_{\epsilon}(V,(\phantom{v},\phantom{v})))^{\gg})=\lbrace 0 \rbrace$ we clearly have $$Ker(d|_{F_{-1}((U_{\epsilon}(\gt)\otimes C_{\epsilon}(V,(\phantom{v},\phantom{v})))^{\gg})})\subseteq \mathcal{Z}_{\epsilon}(\Delta(\gg))\oplus Im(d).$$
$\bullet$ Induction: Let $n\in \mathbb{N}$ and suppose that
$$Ker(d|_{F_{n}((U_{\epsilon}(\gt)\otimes C_{\epsilon}(V,(\phantom{v},\phantom{v})))^{\gg})})\subseteq \mathcal{Z}_{\epsilon}(\Delta(\gg))\oplus Im(d).$$
Let $x\in Ker(d|_{F_{n+1}((U_{\epsilon}(\gt)\otimes C_{\epsilon}(V,(\phantom{v},\phantom{v})))^{\gg})})$. We have $gr(d)(gr(x))=0$ and by \eqref{proof thm kernel of d 1}, there exist $y\in gr_{n}((U_{\epsilon}(\gt)\otimes C_{\epsilon}(V,(\phantom{v},\phantom{v})))^{\gg})$ and $z\in S_{\epsilon}(\gg)^{\gg}$ such that
$$gr(x)=gr(d)(y)+z\otimes 1.$$
Consider the symmetrisation $s:S_{\epsilon}(\gg)\rightarrow U_{\epsilon}(\gg)$ (see \cite{Scheunert83}). We have $x-d(s(y))-\Delta(s(z))\in Ker(d)$ and
$$x-d(s(y))-\Delta(s(z))\in F_{n}((U_{\epsilon}(\gt)\otimes C_{\epsilon}(V,(\phantom{v},\phantom{v})))^{\gg})$$
since
$$gr(x-d(s(y))-\Delta(s(z)))=gr(x)-gr(d(s(y)))-gr(\Delta(s(z)))=gr(x)-gr(d)(y)-z\otimes 1=0.$$
Hence, by assumption, there exist $y'\in (U_{\epsilon}(\gt)\otimes C_{\epsilon}(V,(\phantom{v},\phantom{v})))^{\gg}$ and $z'\in \mathcal{Z}_{\epsilon}(\Delta(\gg))$ such that
$$x-d(s(y))-\Delta(s(z))=d(y')+z'$$
and so
$$x=d(y'+s(y))+z'+\Delta(s(z))$$
which proves that
$$Ker(d)=\mathcal{Z}_{\epsilon}(\Delta(\gg))\oplus Im(d).$$
\end{proof}
\vspace{0.2cm}

\begin{cor}
Let $z\in \mathcal{Z}_{\epsilon}(\gg)$. There exist a unique $a\in \mathcal{Z}_{\epsilon}(\Delta(\gg))$ and a unique $b \in \Big(U_{\epsilon}(\gt)\otimes C_{\epsilon}(V,(\phantom{v},\phantom{v}))\Big)^{\gg}$ such that
$$z\otimes 1=a+\tilde{D}b+b\tilde{D}.$$
\end{cor}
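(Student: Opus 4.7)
The plan is to reduce the corollary to Theorem \ref{thm kernel of d} applied to the element $z \otimes 1 \in U_{\epsilon}(\gt) \otimes C_{\epsilon}(V,(\phantom{v},\phantom{v}))$. Two preliminary verifications are required: that $z \otimes 1$ lies in the $\gg$-invariants, and that it lies in $Ker(d)$. After that, the statement is a direct translation of the theorem's direct-sum decomposition, with a small sign computation to recover the explicit form $\tilde D b + b\tilde D$ of the boundary.

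First I would check $z \otimes 1 \in \bigl(U_{\epsilon}(\gt) \otimes C_{\epsilon}(V,(\phantom{v},\phantom{v}))\bigr)^{\gg}$. For each $y \in \gg$, split $\lbrace \Delta(y), z \otimes 1 \rbrace = \lbrace y \otimes 1, z \otimes 1 \rbrace + \lbrace 1 \otimes \pi_*(y), z \otimes 1 \rbrace$. The first bracket equals $\lbrace y, z \rbrace_{U_{\epsilon}(\gt)} \otimes 1$, which vanishes because the natural map $U_{\epsilon}(\gg) \hookrightarrow U_{\epsilon}(\gt)$ is a morphism of colour Lie algebras and $z \in \mathcal{Z}_{\epsilon}(\gg)$. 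The second bracket vanishes because $z \otimes 1$ and $1 \otimes \pi_*(y)$ occupy disjoint tensor slots and the factors $z$ and $\pi_*(y)$ both have $\mathbb{Z}_2$-degree $0$, so they $\tilde{\epsilon}$-commute in the graded tensor product.

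Second, I would establish $d(z \otimes 1) = 0$. Writing $\tilde D = D + 1 \otimes \psi$, the contribution $\lbrace 1 \otimes \psi, z \otimes 1 \rbrace$ vanishes by the same tensor-factor argument (the two factors have $\mathbb{Z}_2$-degrees $1$ and $0$, and an explicit check shows the two $\tilde{\epsilon}$-signs cancel). The main term is $\lbrace D, z \otimes 1 \rbrace = \sum_i \lbrace e^i \otimes e_i, z \otimes 1 \rbrace$; unfolding the graded product and tracking the commutation factors, each summand rearranges to $-\lbrace z, e^i \rbrace_{U_{\epsilon}(\gt)} \otimes e_i$ (up to a global sign), so the sum vanishes by the $\epsilon$-centrality of $z$. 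This sign-bookkeeping step, which requires one to interpret $\mathcal{Z}_{\epsilon}(\gg)$ carefully inside the larger algebra $U_{\epsilon}(\gt)$ and to use that $|e^i| + |e_i| = 0$, is where the main technical work lies and is the step I expect to be the principal obstacle.

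Combining the two verifications, $z \otimes 1 \in Ker(d) \cap \bigl(U_{\epsilon}(\gt) \otimes C_{\epsilon}(V,(\phantom{v},\phantom{v}))\bigr)^{\gg}$, so Theorem \ref{thm kernel of d} yields a decomposition $z \otimes 1 = a + d(b)$ with $a \in \mathcal{Z}_{\epsilon}(\Delta(\gg))$ and $b \in \bigl(U_{\epsilon}(\gt) \otimes C_{\epsilon}(V,(\phantom{v},\phantom{v}))\bigr)^{\gg}$. Choosing $b$ of $\mathbb{Z}_2 \times \Gamma$-degree $(1, |z|)$, the sign $\tilde{\epsilon}(\tilde D, b)$ equals $-1$, so the graded commutator gives $d(b) = \tilde D b + b\tilde D$, matching the stated form. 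Uniqueness of $a$ is immediate from the direct sum $Ker(d) = \mathcal{Z}_{\epsilon}(\Delta(\gg)) \oplus Im(d)$; uniqueness of $b$ (i.e.\ of its image $d(b) \in Im(d)$) follows by the same direct-sum argument, with $b$ pinned down once one restricts to the $\mathbb{Z}_2$-odd, $\Gamma$-degree-$|z|$ part of the invariants that is exchanged by $d$ with the even component of the same $\Gamma$-degree.
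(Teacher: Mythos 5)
Your outline matches the paper's proof exactly: verify that $z\otimes 1$ is $\gg$-invariant, verify that it lies in $Ker(d)$, invoke Theorem~\ref{thm kernel of d} for the decomposition $z\otimes 1=a+d(b)$, and use $\alpha(z\otimes 1)=0\Rightarrow\alpha(b)=1$ to unfold $d(b)=\tilde{D}b+b\tilde{D}$. The $\gg$-invariance check and the sign analysis for $d(b)$ are both correct. However, the step you yourself flag as ``the principal obstacle'' --- showing $d(z\otimes 1)=0$ --- does not go through as written, and the issue is not sign-bookkeeping but a genuine logical gap. You correctly reduce $\lbrace D,z\otimes 1\rbrace$ to $-\sum_i \lbrace z,e^i\rbrace_{U_{\epsilon}(\gt)}\otimes e_i$, and then assert this vanishes ``by the $\epsilon$-centrality of $z$.'' But $z\in\mathcal{Z}_{\epsilon}(\gg)$ is $\epsilon$-central only in $U_{\epsilon}(\gg)$, and $e^i\in V$ lies outside $\gg$; the bracket $\lbrace z,e^i\rbrace$ is taken in $U_{\epsilon}(\gt)$ and is generically nonzero. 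Concretely, for $z=\Omega(\gg)$ the Leibniz rule gives $\lbrace \Omega(\gg),e^i\rbrace = \sum_k\big(\epsilon(\cdots)\,x^k\,\pi(x_k)(e^i)+\cdots\big)$, and $\sum_i\pi(h)(e^i)\otimes e_i$ for $h\in\hh\subset\gg$ is the image of $\pi(h)\in{\rm End}(V)$ under ${\rm End}(V)\cong V\otimes V$, which is nonzero whenever $\pi(h)\neq 0$. So $z\otimes 1\notin Ker(d)$ in general for $z\in\mathcal{Z}_{\epsilon}(\gg)$, and since $a+\tilde{D}b+b\tilde{D}=a+d(b)\in Ker(d)$ for $a\in\mathcal{Z}_{\epsilon}(\Delta(\gg))$ and $b$ $\gg$-invariant, the identity you are trying to prove forces $z\otimes 1\in Ker(d)$. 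The verification does close --- and your argument becomes correct essentially verbatim --- under the hypothesis $z\in\mathcal{Z}_{\epsilon}(\gt)$, which is the natural input in the Huang--Pand\v{z}i\'c formulation of the Vogan conjecture (the centre of the big algebra maps to the diagonal copy of the centre of the small one); the paper's one-word ``Clearly'' obscures that this is where the hypothesis on $z$ is actually used, and your more explicit derivation is what surfaces it.

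A secondary point, shared with the paper's own proof: the direct-sum $Ker(d)=\mathcal{Z}_{\epsilon}(\Delta(\gg))\oplus Im(d)$ pins down $a$ and $d(b)$ uniquely, but not $b$ itself. Restricting to the $\mathbb{Z}_2$-odd, $\Gamma$-degree-$|z|$ invariants does not rescue this, since $Ker(d)$ has a nontrivial odd part (it contains $d$ of the even invariants), so $b$ can be shifted by any odd element of $Ker(d)$ of the right degree without changing $d(b)$. This imprecision is inherited from the source, but the hand-waving at the end of your proposal should not be presented as if it settles uniqueness of $b$.
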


\begin{proof}
Clearly, $z\otimes 1$ is $\gg$-invariant and in the kernel of $d$. Hence, by Theorem \ref{thm kernel of d}, there exist an unique $a\in \mathcal{Z}_{\epsilon}(\Delta(\gg))$ and an unique $b \in \Big(U_{\epsilon}(\gt)\otimes C_{\epsilon}(V,(\phantom{v},\phantom{v}))\Big)^{\gg}$ such that $z\otimes 1=a+d(b)$. We have $\alpha(z\otimes 1)=0$, then $\alpha(d(b))=0$ and so $\alpha(b)=1$. Hence we have $d(b)=\tilde{D}b+b\tilde{D}$ and we obtain $z\otimes 1=a+\tilde{D}b+b\tilde{D}$.
\end{proof}

\footnotesize
\bibliographystyle{alpha}
\bibliography{CubicDiracoperators}

\end{document}